\documentclass[a4paper,11pt]{article}
\usepackage{geometry} 
\newgeometry{vmargin={30mm}, hmargin={22mm,22mm}} 
\usepackage{graphicx} 
\usepackage{authblk}
\usepackage{amsmath}
\usepackage{amssymb}
\usepackage{amsfonts}
\usepackage{amsthm}
\usepackage{xcolor}
\usepackage{hyperref}
\usepackage{caption}  
\usepackage{subcaption} 
\usepackage{moreverb}  
\usepackage{hyperref} 
\usepackage{mathrsfs} 
\usepackage{epstopdf} 
\usepackage{psfrag} 
\usepackage{enumitem} 
\usepackage{graphics}
\usepackage{graphicx} 
\usepackage{float} 
\usepackage{afterpage}
\usepackage{lineno} 
\usepackage{tabularx} 
\usepackage{mathtools}
\usepackage{physics}
\usepackage{float}
\setlength{\arrayrulewidth}{1mm}
\setlength{\tabcolsep}{18pt}
\usepackage{eucal} 
\allowdisplaybreaks 

\usepackage{hyperref}
\hypersetup{
    colorlinks= true,
    linkcolor=blue,
    filecolor=blue,
    urlcolor=blue,
    citecolor=blue,
}

\usepackage[numbers]{natbib}  

\usepackage{circuitikz} 
\usepackage{tikz}
\usetikzlibrary{positioning}
\definecolor {processblue}{cmyk}{0.96,0,0,0}

\newtheorem{Theorem}{Theorem}[section]

\newtheorem{Remark}[Theorem]{Remark}

\title{Discrete-Time Optimal Control of Species Augmentation for Predator-Prey Model}
\author[1]{Munkaila Dasumani\thanks{Corresponding author: munkaila5@gmail.com}}
\author[2]{Suzanne Lenhart}
\author[3]{Gladys K. Onyambu}
\author[4]{Stephen E. Moore}
\affil[1]{Department of Mathematics, Institute for Basic Sciences, Technology and Innovation, Pan African University, Nairobi, Kenya}
\affil[2]{Department of Mathematics, University of Tennessee, Knoxville, Tennessee, USA}
\affil[3]{Department of Zoology, Jomo Kenyatta University of Agriculture and Technology, Nairobi, Kenya}
\affil[4]{Department of Mathematics, University of Cape Coast, Ghana}
\date{} 
\begin{document} 
	\maketitle
\section*{Abstract}
 Species augmentation is one of the methods used to promote biodiversity and prevent endangered species loss and extinction. The current work applies discrete-time optimal control theory to two models of species augmentation for predator-prey relationships. In discrete-time models, the order in which events occur can give different qualitative results. Two models representing different orders of events of optimal augmentation timing are considered. In one model,  the population grows and predator-prey action occurs before the translocation of reserve species for augmentation. In the second model, the augmentation happens first and is followed by growth and then predator-prey action. The reserve and target populations are subjected to strong Allee effects. The optimal augmentation models employed in this work aim to maximize the prey (target population) and reserve population at the final time and minimize the associated cost at each time step. Numerical simulations in the two models are conducted using the discrete version of the forward-backward sweep method and the sequential quadratic programming iterative method, respectively. The simulation results show different population levels in the two models under varying parameter scenarios. Objective functional values showing percentage increases with optimal controls are calculated for each simulation. Different optimal augmentation strategies for the two orders of events are discussed.  This work represents the first optimal augmentation results for models incorporating the predator-prey relationship with discrete events.

 \subsection*{ Keywords:}   
  Discrete-time models;  Forward-backward sweep method; Order of events; Predator-prey interaction; Sequential quadratic programming; Species augmentation


\section{Introduction}
The interaction between predators and prey is crucial in many ecological systems. These interactions can have positive or negative repercussions on both species. For instance, with a global population size of less than 500 individuals, the hirola antelope,  {\it Beatragus hunteri}, primarily found in northeastern Kenya, is the world’s most endangered/threatened antelope. Predation and habitat loss are thought to be responsible for low abundances of the hirola antelope population \cite{ali2017resource}. Lions, cheetahs, and African wild dogs are their primary predators. However, the battle to save the hirola antelope from extinction is a race against time, which needs critical augmentation strategies \cite{ali2017resource}. Other possible endangered/target species affected by predator-prey interactions that need critical augmentation include the woodland caribou ({\it Rangifer tarandus caribou}), in British Columbia \cite{wittmer2005role,wittmer2005population},  and the Australian terrestrial mammal fauna such as eastern betting ({\it Bettongia gaimardi}), and eastern barred bandicoot ({\it Perameles gunnii}), that has suﬀered a very high rate of decline and extinction relative to predation \cite{radford2018degrees}.

 The movement of animals to re-establish extirpated populations or to augment declining populations is an important tool for conservation biologists~\cite{seddon2014reversing}.  Species augmentation, also referred to as stocking reintroduction \cite{cheyne2006wildlife}, is one of the methods used to prevent species loss and extinction. Several population studies on some threatened/endangered species have recommended the use of augmentation as a technique to prevent species extinction and safeguard biodiversity. Some of these studies include the optimal timing of augmentation of a threatened species in a target region by moving individuals from a reserve or captive population to augment a declining population \cite{bodine2008optimal,bodine2012order}. A rescue of an endangered carnivore using an augmentation strategy was investigated by Manlick et al. \cite{manlick2017augmentation}.  In addition, the optimal genetic augmentation strategies for a threatened species using a continent-island model have been studied by Bodine and  Martinez \cite{bodine2014optimal}. A model for the optimal translocation of an age-structured black rhino population that compares strategies for maximizing the translocation rate and the growth of a newly formed population was presented by Hearne and Swart \cite{hearne1991optimal}. Optimal control of species augmentation in a competition model of differential equations was investigated in \cite{dasumani2025optimal}, where two objective functionals involving the populations and the cost of the controls were considered. The successful reintroduction of African wild dogs ({\it Lycaon pictus}) to Gorongosa National Park in Mozambique has been described in \cite{bouley2021successful}. Their work represents the first transboundary translocation and reintroduction of founding packs of wild dogs to Gorongosa over a 28-month study period.  More works on species augmentation and translocation of endangered species can be found in \cite{haines2006habitat, hearne1991optimal,kingston2004conservation, sinclair2005dynamics, maes2004functional,romain2004density, rasmussen2009endangered}.

   Several developments of the Lotka \cite{lotka1925elements} and Volterra \cite{volterra1926fluctuations} systems have been explored since they were first proposed in the 1920s. Some of these developments using differential equation models can be found in the works \cite{bodine2017predator, aulisa2014continuous, mandal2024dynamic, flores2014dynamics, sasmal2015intra, xue2024analysis,biswas2022dynamics,vijayalakshmi2024mittag, dasumani2024nonlinear,yao2022bifurcations} and the references therein. In addition to the above extension of the Lotka-Volterra system, optimal control of an infected prey-predator model was studied by Qiu et al. \cite{qiu2024optimal}.  The work \cite{kar2012sustainability} provides optimal control of an exploited prey-predator system through the provision of alternative food to predators. San G. et al \cite{san1974optimal} have explored an optimal control of a prey-predator system.   Ibrahim \cite{ibrahim2021optimal} has studied the optimal harvesting of a predator-prey system with a marine reserve.
   
   An optimal control for a predator-prey model with disease in the prey population is studied in \cite{simon2018optimal}. A discrete time-optimal control of an additional food provided to predator-prey systems with applications to pest management and biological conservation is studied in \cite{srinivasu2010time}.  Modeling infectious disease and prey-predator interaction with optimal control theory is studied in \cite{mekonen2024mathematical}. Lazaar and Serhani \cite{lazaar2023stability} have studied the optimal control of a prey-predator model with prey refuge and prey infection. Additional applications of optimal control theory of Lotka-Volterra models can be found in \cite{chatterjee2023predator,chang2012hopf,mondal2023autonomous,miao2024parameter,juneja2023dynamical,ghosh2014sustainable,ibrahim2021optimal}. 

   Mathematical models could be represented as continuous-time or discrete-time equations. Continuous-time models of differential equations have been used to model overlapping generations in a population, assuming that all events denoting biological mechanisms occur simultaneously. However, many species with non-overlapping generations in their populations have well-defined cycles of reproduction (births and deaths are not evenly distributed over time) that generally occur over a few weeks or months and are suitably modeled using discrete-time difference equations.
   
  Discrete-time models have received significant contributions in recent years due to their ability to depict real-world scenarios, such as the model defined in this current work. Discrete-time difference equations have been successfully applied to several population biology and behavioral ecological scenarios \cite{clark2000dynamic,hof1998spatial,hof2002spatial,el2020spatiotemporal,dasumani2025augmentation}. In addition, discrete-time optimal control applied to pest control problems is investigated by Ding et al. \cite{ding2014discrete}. Optimal control of harvest timing in discrete population models was provided by Grey et al. \cite{grey2021optimal}. Whittle et al. \cite{whittle2006optimal} have studied optimal control for managing an invasive plant species. They formulated and solved a discrete-time optimal control problem to determine where control is best applied over a finite time horizon. 
  
  The sequence in which events occur is important in discrete-time models. An overview of the timing of events and model formulation for discrete-time difference equation models is outlined in Section $2.4.2$ of
Caswell's book \cite{caswell2000matrix}. The order of events in optimal control of the integrodifference model is investigated in \cite{lenhart2009investigating}. Again, optimal control of the integrodifference equation with growth-harvesting-dispersal order was presented by Zhong and Lenhart \cite{zhong2012optimal}.  Furthermore, comparing discrete models for optimal control of species augmentation with different possible orders of events was examined by Bodine et al. \cite{bodine2012order}.

  In this current study, we provide a novel discrete‐time augmentation model for predator-prey dynamics with two orders of events: {\bf Model A}: growth followed by predator-prey action and then augmentation,  and {\bf Model B}: augmentation followed by growth and then predator-prey action.  We assume in this work that the prey population is on the verge of declining due to predation; hence, the prey population is referred to as the endangered/targeted species.  Note that one may want to consider the predator population as an endangered/target species if initially their population level is below the threshold for growth. The model presented in this study can be applied to predator-prey interaction models, such as the case of the hirola antelope. This work marks the first study of optimal augmentation control in discrete-time models with predator-prey relationships. The objective functional seeks to maximize the prey (target population) and the reserve population at the final time and minimize the associated augmentation cost at each time step. 

  The rest of the paper is structured as follows:  Section~\ref{sec2a} shows the discrete model without augmentation. Section~\ref{sec2} presents the formulation of the augmentation models with the order of events. Results and discussions of the models, including numerical simulations, are presented in Section~\ref{sec3}. The conclusion is captured in Section~\ref{sec4}.

\section{Formulation of the discrete model with augmentation}
\label{sec2a}
In this section, we consider the formulation of a discrete-time predator-prey model with three populations, also referred to as states; the prey population  {\bf u}, predator population  {\bf v}, and reserve population  {\bf w}  are denoted by the vector 
\begin{equation} \label{vectoreqn}
    {\bf u} = \left(u_0, u_1, \dots,u_T\right), \hspace{10pt}  {\bf v}  = \left(v_0, v_1, \dots, v_T\right), \hspace{10pt}   {\bf w}  = \left(w_0, w_1, \dots, w_T\right),
\end{equation} 
respectively, and the subscripts denote the time steps. We assume in this work that the prey population is declining due to predation; hence, the prey population is referred to as the endangered/targeted species. For the order of events here, the growth of  {\bf u}  and  {\bf w}  populations comes first, and then the predator-prey action happens in  {\bf u}  and   {\bf v}. Thus, the prey population grows, followed by predator-prey interaction, and the predator decay.  Without an augmentation effort, our model is defined by the following nonlinear discrete difference equations

\begin{equation}\label{predatoreqn1}
    \left\{\begin{aligned}
     & u_{t+1}=u_t \left[s\left(1-\dfrac{u_t}{k_u}\right)\left(\dfrac{u_t}{k_u}-m\right)+1 \right] (1- \delta_1 v_t), \\
     &v_{t+1}= \left(v_t+u_t \left[s\left(1-\dfrac{u_t}{k_u}\right)\left(\dfrac{u_t}{k_u}-m\right)+1 \right] 
     \delta_2 v_t \right) (1- \gamma),\\
        &w_{t+1}=w_t + qw_t\left(1-\dfrac{w_t}{k_w}\right)\left(\dfrac{w_t}{k_w}-n\right),\\
 \end{aligned}\right.
\end{equation}
for each time step $t$ $ (t = 0, 1, \dots, T-1)$,  with given initial population levels, 
\begin{equation}\label{initialdiscrete}
   {\bf u}_0,   {\bf v}_0,  {\bf w}_0,
\end{equation}
where $u_t$ is the prey population, $v_t$ is the predator population, and $w_t$ reserve population at time step $t$. The parameters $s,k_u,m,,\delta_1, \delta_2, \gamma, q, k_w, n$ are all positive constants.  The parameter $s$ denotes the intrinsic growth rate of the prey, $q$ is the intrinsic growth rate of the reserve population, $k_u$ is the carrying capacity of the prey, and $k_w$ is the carrying capacity of the reserve. With  $m$ and $n$ as strong Allee effect constants,  then  $m k_u$ and $n k_w$  are the Allee thresholds for growth 
for the prey and the predator populations, respectively, such that $0<m<1$ and $0<n<1$  
  \cite{courchamp1999inverse, maciel2015allee, bodine2008optimal,musgrave2015population}. The parameter $\delta_1$ represents the consumption rate of prey by a predator, $\delta_2$ represents the attack rate, and $\gamma$ represents the natural decay of  {\bf v}   (with or without  {\bf u} ). The reserve population is a viable source for harvesting individuals to augment the endangered population. The reserve and the target populations are assumed to have strong Allee effect growth.

\section{Order of events}
\label{sec2}
The order of events in discrete-time models significantly impacts the dynamics of the species. According to Bodine et al. \cite{bodine2012order}, the optimal timing of augmentation of a threatened/endangered species in a target zone must all be converted to a certain order in the species' life cycle. 
We will develop two discrete-time augmentation models representing different sequences of events. In the current study, the two main orders of events  employed are:
 \begin{itemize}
    \item {\bf Model A}: Growth followed by predator-prey action and  then augmentation;
    \item {\bf Model B}: Augmentation followed by growth and then predator-prey action.
 \end{itemize}
 
 Now let $f_{\bf u}(u_t)$, $f_{\bf v}(v_t)$, and $f_{\bf w}(w_t)$ be functions of the growth or decay of the prey, predator, and the reserve populations, respectively,
 \begin{equation*}
    \begin{aligned}
      f_{\bf u}(u_t) & = su_t\left(1-\dfrac{u_t}{k_u}\right)\left(\dfrac{u_t}{k_u}-m\right)+u_t, \\
      f_{\bf v}(v_t) & =  (1- \gamma)v_t,\\
         f_{\bf w}(w_t) & = qw_t\left(1-\dfrac{w_t}{k_w}\right)\left(\dfrac{w_t}{k_w}-n\right) + w_t.\\
 \end{aligned}
\end{equation*}
Then the order of events of the model without the augmentation can be written as 
  \begin{equation*}\label{systemR}
    \left\{\begin{aligned}
      u_{t+1} & = f_{\bf u}(u_t) - \delta_1 f_{\bf u}(u_t)v_t, \\
     v_{t+1} & = f_{\bf v}\left(v_t + \delta_2f_{\bf u}(u_t)v_t \right),\\
     w_{t+1} & = f_{\bf w}(w_t).
 \end{aligned}\right.
\end{equation*}
In this order of events with no augmentation:  (I) the prey population grows, (II) the predator-prey interaction happens, and (III) the predator decays. Note that the reserve population is decoupled and does not factor into the order of events. 

In the case when the above order of events happened in the habitat of the species, it can cause the extinction of both the predator and the prey populations, thereby creating an imbalance in the ecological system. Hence, this motivated the formulation of the optimal control models based on the two orders of events described in this work. However, one can also explore other possible orders of events that can emanate from our model.

We study how the different strategies employed in the order of events will impact the optimal augmentation results. The species dynamics of the reserve, target, and predator-prey interaction will also be investigated. The augmentation controls are chosen to maximize the prey (target population) and the reserve population at the final time and minimize the associated cost at each time step $(0, 1, 2, \dots, T-1).$   We assume to  maximize  the population $(\bf u + w)$ by final time $T$  with  proportional weights applied to both species. However, it is important to maximize the target population than the reserve population at the final time. The vector of controls ${\bf h} = (h_0, h_1, \dots, h_{T-1})$, where $h_t$ represents the control effort to move the proportion of the reserve population to augment the target population at each time step $t$ in both models.

     \subsection{{\bf Model A}: Growth followed by predator-prey action and  then augmentation}
     \label{sec 4.1}
In this model, the reserve population grows followed by the predator-prey interaction and then translocating individuals (from  {\bf w}  to  {\bf u} ) to augment the target prey population. This strategy is essential because when the predator-prey action happens on  {\bf u}  and  {\bf v}  populations, the  {\bf u}  population will be below the threshold for growth in some periods, and lastly  {\bf v}  population will decay. 
By letting  $A(w_t) = h_tw_t$ be an augmentation function, we can write
  \begin{equation}\label{eqnA1}
  \text{{\bf Model A}} : 
    \left\{\begin{aligned}  
      u_{t+1} & = f_{\bf u}(u_t) - \delta_1 f_{\bf u}(u_t)v_t + A(f_{\bf w}w_t),\\
     v_{t+1} & = f_{\bf v}\left(v_t + \delta_2f_{\bf u}(u_t)v_t \right),\\
     w_{t+1} & = f_{\bf w}(w_t)-A(f_{\bf w}w_t).
 \end{aligned}\right.
\end{equation}
Now, substituting the growth or decay of the prey, predator and the reserve populations' functions and augmentation function into (\ref{eqnA1}) with control effort  {\bf h}, the order of event of {\bf Model A} becomes

    \begin{equation}\label{predatoreqn2}
    \left\{\begin{aligned}
     & u_{t+1}=u_t \left[s\left(1-\dfrac{u_t}{k_u}\right)\left(\dfrac{u_t}{k_u}-m\right)+1 \right] (1- \delta_1 v_t)  + h_tw_t\left[ q\left(1-\dfrac{w_t}{k_w}\right)\left(\dfrac{w_t}{k_w}-n\right)+1\right],\\ 
      &v_{t+1}= \left(v_t+ u_t \left[s\left(1-\dfrac{u_t}{k_u}\right)\left(\dfrac{u_t}{k_u}-m\right)+1 \right]
     \delta_2 v_t\right) (1- \gamma),\\
     &w_{t+1}  = (w_t-h_tw_t)\left[ q\left(1-\dfrac{w_t}{k_w}\right)\left(\dfrac{w_t}{k_w}-n\right)+1\right],\\
 \end{aligned}\right.
\end{equation}
where Equation~(\ref{vectoreqn}) represents the vector of each state variable with the initial conditions in Equation~(\ref{initialdiscrete}). Again, we assume initially that the prey (target) species has an initial population  $u_0$ below the threshold for growth and the reserve species has an initial population $w_0$ above the minimum threshold for growth and consider the constraint  $w_0 >nk_w$.  The objective functional is to maximize

\begin{equation}\label{objective1}
    J({\bf h}) = u_T + Nw_T- \sum_{t=0}^{T-1} \left(M_1h^2_t +M_2h_t\right)
\end{equation}
over 
${\bf h} \in \Omega$, the control set.  The weight $N$ ( $0<N<1$) is the constant for maximizing the reserve population at the final time,  $M_i>0$ for $i = 1,2$  are the cost constants, and $h_t$ is the proportion of the reserve population to be moved to augment the target population at each time step $(t = 0, 1, 2, \dots, T-1).$  Note that the prey/target population will be maximized at the final time, balancing the other terms in the objective functional. The  control set is given by
\begin{equation} \label{admicontrol}
    \Omega =\left\{ {\bf h} = (h_0, h_1, \dots, h_{T-1}) \hspace{3pt} | \hspace{3pt}0 \leq h_t \leq A, \hspace{10pt} t = 0, 1, \dots, T-1\right\},
\end{equation}
where $A$ is the maximum control effort. For instance, if $
A=0.7$, then at most 70\% of the reserve population can be moved for augmentation. In addition, the state variable vectors have an additional component than the control vectors.  The cost terms account for linear and nonlinear effects in the costs of translocating individuals from the reserve to the target region. We assume that the cost constants $M_1>0$ and $M_2 \geq 0$ in the rest of the work.

In the subsequent section, we will employ the extension of Pontryagin's Maximum Principle $(\mathcal{PMP})$ \cite{pontryagin1962mathematical} to obtain the necessary conditions satisfying an optimal control and the corresponding states in our discrete Equations~(\ref{predatoreqn2}) - (\ref{admicontrol}). Applying the generalization of $\mathcal{PMP}$  for optimal control problems with  discrete state systems \cite{lenhart2007optimal}, 
we form the Hamiltonian,  $\mathcal{H}_t$:

\begin{equation}\label{Heqn}
 \begin{aligned}
\mathcal{H}_t = -M_1h^2_t -M_2h_t & + \lambda_{u, t+1} \left \{u_t \left[s\left(1-\dfrac{u_t}{k_u}\right)\left(\dfrac{u_t}{k_u}-m\right)+1 \right] (1- \delta_1 v_t) \right\}\\
        & +  \lambda_{u, t+1} \left \{ h_tw_t\left[ q\left(1-\dfrac{w_t}{k_w}\right)\left(\dfrac{w_t}{k_w}-n\right)+1\right] \right\}\\ 
        & +  \lambda_{v, t+1} \left \{ \left(v_t+ u_t \left[s\left(1-\dfrac{u_t}{k_u}\right)\left(\dfrac{u_t}{k_u}-m\right)+1 \right]
     \delta_2 v_t\right) (1- \gamma) \right\}\\ 
      & + \lambda_{w, t+1} \left \{ (w_t-h_tw_t)\left[ q\left(1-\dfrac{w_t}{k_w}\right)\left(\dfrac{w_t}{k_w}-n\right)+1\right] \right \}.
    \end{aligned}
    \end{equation}
    Since the Hamiltonian (\ref{Heqn}) satisfies the concavity condition
      \begin{equation}
         \dfrac{\partial^2 \mathcal{H}_t }{\partial h^2_t} = -2M_1 \leq 0,
      \end{equation}
      for all time steps $t$  and $M_1>0$, it gives way to use the discrete version of $\mathcal{PMP}$ for {\bf Model A}, \cite{canon1970theory}.
Therefore, we derive the necessary conditions in the following theorem using the Hamiltonian (\ref{Heqn}).

\begin{Theorem}\label{apendix1}
Given an optimal control ${\bf h}^\ast \in \Omega$, $({\bf h}^\ast = (h^\ast_0, h^\ast_1, \dots, h^\ast_{T-1}))$ and the corresponding states solutions ${\bf u}^\ast =(u^\ast_0, u^\ast_1, \dots, u^\ast_T)$, ${\bf v}^\ast =(v^\ast_0, v^\ast_1, \dots, v^\ast_T)$ and ${\bf w}^\ast =(w^\ast_0, w^\ast_1, \dots, w^\ast_T)$, then from Equations~(\ref{predatoreqn2}) - (\ref{admicontrol}) there exists adjoint functions $\lambda_{{\bf u}} =(\lambda_{u,0},\lambda_{u,1}, \dots, \lambda_{u,T})$,  $\lambda_{{\bf v}} = (\lambda_{v,0},\lambda_{v,1}, \dots, \lambda_{v,T})$ and  $\lambda_{{\bf w}} =(\lambda_{w,0},\lambda_{w,1}, \dots, \lambda_{w,T})$ satisfying the adjoint equations:

 \begin{equation}\label{adjointeqn1}
    \left\{\begin{aligned}
     \lambda_{u,t} = &\bigg(\left(1 -\delta_{1} v^\ast_t\right)\lambda_{u,t+1}+ \delta_{2} v^\ast_t \left(1-\gamma \right)\lambda_{v,t+1} \bigg) \left[s \left(1-\frac{u^\ast_t}{k_{u}}\right) \left(\frac{u^\ast_t}{k_{u}}-m\right)+1\right] \\ 
     &+\bigg( u^\ast_t  \left(1-\delta_{1} v^\ast_t\right)\lambda_{u,t+1} +  u^\ast_t  \delta_{2} v^\ast_t \left(1-\gamma \right)\lambda_{v,t+1} \bigg) \left[\frac{s}{k_{u}}\left(1-\frac{u^\ast_t}{k_{u}}\right)-\frac{s}{k_{u}} \left(\frac{u^\ast_t}{k_{u}}-m\right)\right],\\ 
      \lambda_{v,t} =  & -\lambda_{u,t+1} u^\ast_t \left[s \left(1-\frac{u^\ast_t}{k_{u}}\right) \left(\frac{u^\ast_t}{k_{u}}-m\right)+1\right] \delta_{1}\\
      &+\lambda_{v,t+1} \left[1+u^\ast_t \left(s \left(1-\frac{u^\ast_t}{k_{u}}\right) \left(\frac{u^\ast_t}{k_{u}}-m\right)+1\right) \delta_{2}\right] \left(1-\gamma \right),\\
    \lambda_{w,t} = &  \bigg(h^\ast_t \lambda_{u,t+1}  + \left(1-h^\ast_t\right) \lambda_{w,t+1}\bigg)  \left[q \left(1-\frac{w^\ast_t}{k_{w}}\right) \left(\frac{w^\ast_t}{k_{w}}-n\right)+1\right]\\
     & + w^\ast_t\bigg(h^\ast_t\lambda_{u,t+1}  + \left(1-h^\ast_t \right)\lambda_{w,t+1} \bigg) \left[ \frac{q}{k_{w}} \left(1-\frac{w^\ast_t}{k_{w}}\right)- \frac{q}{k_{w}}\left(\frac{w^\ast_t}{k_{w}}-n\right)\right],
 \end{aligned}\right.
\end{equation}

with the transversality condition 

\begin{equation}
    \lambda_{u, T} = 1, \hspace{15pt}  \lambda_{v, T} = 0, \hspace{15pt}  \lambda_{w, T} = N.
\end{equation}
Moreover, the characterization of $h^\ast$ is given by
\begin{equation}\label{charact1}
      h^{\ast}_{t} = \min \left\{ A, \max \left\{0, \frac{\left(\lambda _{u,t+1}-\lambda _{w,t+1} \right) \left[q \left(1-\frac{w^\ast_t}{k_w}\right) \left(\frac{w^\ast_t}{k_w}-n\right)+1\right]w^\ast_t -M_2}{2 M_1}\right\} \right\},
\end{equation}
\end{Theorem}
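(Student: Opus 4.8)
The plan is to apply the discrete-time Pontryagin Maximum Principle (following \cite{lenhart2007optimal, canon1970theory}) directly to the Hamiltonian $\mathcal{H}_t$ in (\ref{Heqn}), which has already been built by adjoining the three state equations of (\ref{predatoreqn2}) through the forward-shifted multipliers $\lambda_{u,t+1}, \lambda_{v,t+1}, \lambda_{w,t+1}$. The principle supplies exactly the three ingredients of the theorem: the adjoint (costate) recursions obtained as $\lambda_{s,t} = \partial \mathcal{H}_t/\partial s_t$ for each state $s \in \{u,v,w\}$; the transversality conditions read off from the terminal payoff in (\ref{objective1}); and the optimality condition that $h_t^\ast$ maximizes $\mathcal{H}_t$ over the admissible interval $[0,A]$. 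Because the concavity $\partial^2 \mathcal{H}_t/\partial h_t^2 = -2M_1 \le 0$ has already been verified in the excerpt, the first-order stationarity condition together with a projection onto $[0,A]$ is guaranteed to deliver the global maximizer, which legitimizes invoking the principle in this clamped form.

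First I would derive the three adjoint equations by partial differentiation of (\ref{Heqn}). Writing $g(u_t) = s(1 - u_t/k_u)(u_t/k_u - m) + 1$ so that the prey growth term is $u_t\, g(u_t)$, the derivative $\partial \mathcal{H}_t/\partial u_t$ collects contributions only from the $\lambda_{u,t+1}$ prey term and the $\lambda_{v,t+1}$ predator term (the augmentation and reserve terms are $u_t$-free); the product rule with $g'(u_t) = \frac{s}{k_u}(1 - u_t/k_u) - \frac{s}{k_u}(u_t/k_u - m)$ then reproduces the grouped form of $\lambda_{u,t}$ in (\ref{adjointeqn1}), the $g$-part landing in the first bracketed line and the $u_t g'$-part in the second. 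Similarly $\partial \mathcal{H}_t/\partial v_t$ picks up the $-\delta_1 v_t$ factor of the prey term and the linear $v_t$ dependence of the predator term, yielding $\lambda_{v,t}$; and $\partial \mathcal{H}_t/\partial w_t$ picks up the augmentation term $h_t w_t[\,\cdots]$ weighted by $\lambda_{u,t+1}$ together with the reserve term $(w_t - h_t w_t)[\,\cdots]$ weighted by $\lambda_{w,t+1}$, whose common reserve-growth factor and its $w_t$-derivative produce $\lambda_{w,t}$. The transversality conditions are then immediate: the salvage payoff is $u_T + N w_T$, so $\lambda_{u,T} = 1$, $\lambda_{v,T} = 0$, and $\lambda_{w,T} = N$.

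For the control characterization I would isolate the $h_t$-dependent part of (\ref{Heqn}), namely $-M_1 h_t^2 - M_2 h_t + (\lambda_{u,t+1} - \lambda_{w,t+1})\, h_t w_t\, [q(1 - w_t/k_w)(w_t/k_w - n) + 1]$, and set its $h_t$-derivative to zero. This produces the unconstrained stationary value $\tilde h_t = \big[(\lambda_{u,t+1} - \lambda_{w,t+1}) w_t [q(\cdots)+1] - M_2\big]/(2M_1)$; by the established concavity $\tilde h_t$ is the interior maximizer, so clamping it to $[0,A]$ through the $\min\{A, \max\{0, \cdot\}\}$ operator gives exactly (\ref{charact1}), evaluated at the optimal states.

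The argument is conceptually routine once the principle is invoked, so the only real obstacle is the algebraic bookkeeping in the adjoint recursions — in particular, correctly tracking every product- and chain-rule term arising from the cubic Allee growth factors and matching the way (\ref{adjointeqn1}) groups terms by the common factor $[s(1 - u_t^\ast/k_u)(u_t^\ast/k_u - m) + 1]$ together with its $u_t$-derivative (and analogously by the reserve-growth factor in the $\lambda_{w,t}$ equation). I would manage this by naming each growth factor once and differentiating symbolically, which keeps the $\lambda_{u,t}$ and $\lambda_{w,t}$ expressions in precisely the factored shape displayed in the statement and avoids sign or grouping errors.
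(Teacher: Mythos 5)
Your proposal is correct and follows essentially the same route as the paper's own proof: adjoint equations obtained as $\lambda_{s,t} = \partial \mathcal{H}_t/\partial s_t$ for each state, transversality conditions read off from the salvage term $u_T + N w_T$, and the control characterization obtained by setting $\partial \mathcal{H}_t/\partial h_t = 0$ on the interior of the control set and projecting onto $[0,A]$, with the concavity $-2M_1 \leq 0$ justifying that the clamped stationary point is the maximizer. The grouping of the $\lambda_{u,t}$ and $\lambda_{w,t}$ equations by the growth factor and its derivative matches the paper's simplification step exactly.
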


	\begin{proof}
       Supposing the vector of control  ${\bf h}^\ast \in \Omega$, $({\bf h}^\ast = (h^\ast_0, h^\ast_1, \dots, h^\ast_{T-1}))$ with the corresponding state's solutions ${\bf u}^\ast =(u^\ast_0, u^\ast_1, \dots, u^\ast_T)$, ${\bf v}^\ast =(v^\ast_0, v^\ast_1, \dots, v^\ast_T)$ and ${\bf w}^\ast =(w^\ast_0, w^\ast_1, \dots, w^\ast_T)$. Using the extension of Pontryagin’s maximum principle for discrete systems \cite{lenhart2007optimal,pontryagin1962mathematical} and the Hamiltonian 
     (\ref{Heqn}), we get
     \begin{equation}\label{adjointeqn2}
    \left\{\begin{aligned}
     \lambda_{u,t} = &  \dfrac{\partial \mathcal{H}_t }{\partial u_t},\\ 
      \lambda_{v,t} =  &  \dfrac{\partial \mathcal{H}_t }{\partial v_t},\\ 
     \lambda_{w,t} = &   \dfrac{\partial \mathcal{H}_t }{\partial w_t},
 \end{aligned}\right.
\end{equation}
which gives the following results:
   \begin{equation}\label{adjointt1}
    \begin{aligned}
     \lambda_{u,t} = &\lambda_{u,t+1} \left[s \left(1-\frac{u^\ast_t}{k_{u}}\right) \left(\frac{u^\ast_t}{k_{u}}-m\right)+1\right] \left(1 -\delta_{1} v^\ast_t\right) \\ 
     &+\lambda_{u,t+1} u^\ast_t \left[\frac{s}{k_{u}}\left(1-\frac{u^\ast_t}{k_{u}}\right)-\frac{s}{k_{u}} \left(\frac{u^\ast_t}{k_{u}}-m\right)\right] \left(1-\delta_{1} v^\ast_t\right) \\
     &+\lambda_{v,t+1} \left[\left(s \left(1-\frac{u^\ast_t}{k_{u}}\right) \left(\frac{u^\ast_t}{k_{u}}-m\right)+1\right) \delta_{2} v^\ast_t\right] \left(1-\gamma \right),\\
     &+\lambda_{v,t+1} \left[u^\ast_t \left(\frac{s}{k_{u}}\left(1-\frac{u^\ast_t}{k_{u}}\right)-\frac{s}{k_{u}} \left(\frac{u^\ast_t}{k_{u}}-m\right)\right) \delta_{2} v^\ast_t\right] \left(1-\gamma \right),
 \end{aligned}
\end{equation}
and simplifying Equation~(\ref{adjointt1}) yields 
\begin{equation}\label{adjointsystem1}
    \begin{aligned}
     \lambda_{u,t} = &\bigg(\left(1 -\delta_{1} v^\ast_t\right)\lambda_{u,t+1}+ \delta_{2} v^\ast_t \left(1-\gamma \right)\lambda_{v,t+1} \bigg) \left[s \left(1-\frac{u^\ast_t}{k_{u}}\right) \left(\frac{u^\ast_t}{k_{u}}-m\right)+1\right] \\ 
     &+\bigg( u^\ast_t  \left(1-\delta_{1} v^\ast_t\right)\lambda_{u,t+1} +  u^\ast_t  \delta_{2} v^\ast_t \left(1-\gamma \right)\lambda_{v,t+1} \bigg) \left[\frac{s}{k_{u}}\left(1-\frac{u^\ast_t}{k_{u}}\right)-\frac{s}{k_{u}} \left(\frac{u^\ast_t}{k_{u}}-m\right)\right].\\
 \end{aligned}
\end{equation}
   Again, from the second equation in (\ref{adjointeqn2}), we have
   \begin{equation}\label{adjointsystem2}
    \begin{aligned}
      \lambda_{v,t} =  & -\lambda_{u,t+1} u^\ast_t \left[s \left(1-\frac{u^\ast_t}{k_{u}}\right) \left(\frac{u^\ast_t}{k_{u}}-m\right)+1\right] \delta_{1}\\
      &+\lambda_{v,t+1} \left[1+u^\ast_t \left(s \left(1-\frac{u^\ast_t}{k_{u}}\right) \left(\frac{u^\ast_t}{k_{u}}-m\right)+1\right) \delta_{2}\right] \left(1-\gamma \right).
 \end{aligned}
\end{equation}
Moreover, the third equation in (\ref{adjointeqn2}) gives
\begin{equation}\label{adjoint2}
    \begin{aligned}
     \lambda_{w,t} = &  \lambda_{u,t+1} h^\ast_t \left[q \left(1-\frac{w^\ast_t}{k_{w}}\right) \left(\frac{w^\ast_t}{k_{w}}-n\right)+1\right]\\
     &+\lambda_{u,t+1} h^\ast_tw^\ast_t \left[ \frac{q}{k_{w}} \left(1-\frac{w^\ast_t}{k_{w}}\right)- \frac{q}{k_{w}}\left(\frac{w^\ast_t}{k_{w}}-n\right)\right]\\
     &+ \lambda_{w,t+1}  \left(1-h^\ast_t\right) \left[q \left(1-\frac{w^\ast_t}{k_{w}}\right) \left(\frac{w^\ast_t}{k_{w}}-n\right)+1\right]\\
     &+\lambda_{w,t+1} \left(w^\ast_t-h^\ast_t w^\ast_t\right)  \left[\frac{q}{k_{w}}\left(1-\frac{w^\ast_t}{k_{w}}\right)- \frac{q}{k_{w}}\left(\frac{w^\ast_t}{k_{w}}-n\right)\right],
 \end{aligned}
\end{equation}
and simplifying  Equation~(\ref{adjoint2}), we obtain
  \begin{equation}\label{adjointsystem3}
    \begin{aligned}
     \lambda_{w,t} = &  \bigg(h^\ast_t \lambda_{u,t+1}  + \left(1-h^\ast_t\right) \lambda_{w,t+1}\bigg)  \left[q \left(1-\frac{w^\ast_t}{k_{w}}\right) \left(\frac{w^\ast_t}{k_{w}}-n\right)+1\right]\\
     & + w^\ast_t\bigg(h^\ast_t\lambda_{u,t+1}  + \left(1-h^\ast_t \right)\lambda_{w,t+1} \bigg) \left[ \frac{q}{k_{w}} \left(1-\frac{w^\ast_t}{k_{w}}\right)- \frac{q}{k_{w}}\left(\frac{w^\ast_t}{k_{w}}-n\right)\right].\\
 \end{aligned}
\end{equation}
Equations~(\ref{adjointsystem1}), (\ref{adjointsystem2}) and (\ref{adjointsystem3}) represent the  adjoint discrete equations with the transversality conditions $\lambda_{u, T} = 1$,  $\lambda_{v, T} = 0,$  $\lambda_{w, T} = N.$ \\
   In addition, the Hamiltonian differentiated with respect to the control on the interior of the control set gives 
    \begin{equation}\label{heqn2}
    \begin{aligned}
     0 =  & -2 M_{1} h_t-M_{2}+ w_t\lambda_{u,t+1} \left[q \left(1-\frac{w_t}{k_{w}}\right) \left(\frac{w_t}{k_{w}}-n\right)+1\right] \\
     &- w_t\lambda_{w,t+1} \left[q \left(1-\frac{w_t}{k_{w}}\right) \left(\frac{w_t}{k_{w}}-n\right)+1\right]\hspace{5pt} \text{at} \hspace{5pt} h^\ast_t.
 \end{aligned}
\end{equation}
Solving for $h^\ast_t$ in Equation~(\ref{heqn2}) on the interior of the control set and simplifying the terms, we get 
 \begin{equation}h^\ast_t = \frac{ \left(\lambda _{u,t+1}-\lambda _{w,t+1} \right)\left[q \left(1-\frac{w^\ast_t}{k_w}\right) \left(\frac{w^\ast_t}{k_w}-n\right)+1\right]w^\ast_t -M_2}{2 M_1}, 
 \end{equation}
which then, taking bounds into account, gives the required optimal control characterization as 
\begin{equation}
    h^{\ast}_{t} = \min \left\{ A, \max \left\{0, \frac{ \left(\lambda _{u,t+1}-\lambda _{w,t+1} \right) \left[q \left(1-\frac{w^\ast_t}{k_w}\right) \left(\frac{w^\ast_t}{k_w}-n\right)+1\right]w^\ast_t -M_2}{2 M_1}\right\} \right\}.
\end{equation}
 The state system~(\ref{predatoreqn2}), the adjoint system (\ref{adjointeqn1}), and the optimal control characterization (\ref{charact1}) give the optimality system for the {\bf Model A} of the augmentation.
   \end{proof}

Using the discrete version of the forward-backward sweep method, we numerically solve  {\bf Model A}  under distinct parameter values. This can be achieved as follows:

\begin{enumerate}
    \item [I.] First, an initial guess is made for the vector of optimal controls $({\bf h}^\ast = (h_0, h_1, \dots, h_{T-1}))$.
    \item [II.] The second step is to solve forward for the vectors $({\bf u} = \left(u_0, u_1, \dots,u_T\right))$,  $({\bf v} = \left(v_0, v_1, \dots,v_T\right))$, and $( ({\bf w} = \left(w_0, w_1, \dots, w_T\right))$ using the initial conditions $u_0, v_0$ and $ w_0$ and the current vector for  {\bf h} .
    \item[III.] Further, using the transversality conditions $\lambda_{u, T} = 1$,  $\lambda_{v, T} = 0,$  $\lambda_{w, T} = N,$ and the vectors $u,v,w$ and  {\bf h}  in step II, we solve backwards for the adjoint variables $(\lambda_{{\bf u}} =(\lambda_{u,0},\lambda_{u,1}, \dots, \lambda_{u,T}))$,  $(\lambda_{{\bf v}} = (\lambda_{v,0},\lambda_{v,1}, \dots, \lambda_{v,T}))$ and  $(\lambda_{{\bf w}} =(\lambda_{w,0},\lambda_{w,1}, \dots, \lambda_{w,T}))$.
    \item[IV.] At this stage, the control is updated using a convex combination of the previous control and the characterization using the current adjoint and state variables from the current iteration.
    \item[V.] Finally, the updated control is used in the iterated process until the successive iterates of the control values are sufficiently close \cite{kelley1999iterative, lenhart2007optimal}.
\end{enumerate}

 \subsection{ {\bf Model B}: Augmentation followed by growth and then predator-prey action} 
 The reserve population is harvested to augment the target population. 
 After the first action, augmentation from  {\bf w}  to  {\bf u},  
 we substitute the results $u_t+h_tw_t$ and $w_t-h_tw_t$ into the respective growth functions of  {\bf u}  and  {\bf w}  in Equation~(\ref{predatoreqn1}). Lastly, the predator-prey action happens.  
If $A(w_t) = h_tw_t$ is the  augmentation function, then we can have 
\begin{equation}
 \text{{\bf Model B}} : 
    \left\{\begin{aligned}  
      u_{t+1} & = f_{\bf u}(u_t + A(w_t)) - \delta_1 f_{\bf u}(u_t + A(w_t))v_t,\\
     v_{t+1} & = f_{\bf v}\left(v_t + \delta_2f_{\bf u}(u_t + A(w_t))v_t \right),\\
     w_{t+1} & = f_{\bf w}(w_t - A(w_t)),
 \end{aligned}\right.
\end{equation}
so that the state equations of {\bf Model B} can be constructed as
\begin{equation}\label{predatoreqn3}
    \left\{\begin{aligned}
      &u_{t+1} = \left(u_t+ h_tw_t\right) \left[s\left(1-\dfrac{u_t+h_tw_t}{k_u}\right)\left(\dfrac{u_t+h_tw_t}{k_u}-m\right) 
+1 \right](1-\delta_1 v_t),\\ 
    &v_{t+1} = \left(v_t +\left(u_t+ h_tw_t\right) \left[s\left(1-\dfrac{u_t+h_tw_t}{k_u}\right)\left(\dfrac{u_t+h_tw_t}{k_u}-m\right) +1 \right]  \delta_2v_t \right) \left( 1- \gamma  \right),\\ 
      & w_{t+1}  = \left(w_t- h_tw_t\right)\left[ q\left(1-\dfrac{w_t-h_tw_t}{k_w}\right)\left(\dfrac{w_t-h_tw_t}{k_w}-n\right)+1\right],
 \end{aligned}\right.
\end{equation}

with the same initial conditions  (\ref{initialdiscrete}) and $t = 0, 1, \dots, T-1$. The same assumptions that the prey (target) species has an initial population $u_0$ below the threshold for growth and the reserve species has an initial population $w_0$ above the threshold for growth and other assumptions applied in the first scenario are valid here. Again, the objective functional $J({\bf h})$ in Equations~(\ref{objective1})-(\ref{admicontrol}) remains the same in this strategy. The Hamiltonian function for {\bf Model B} is then constructed as
 \begin{align}\notag
\mathcal{H}_t = &-M_1h^2_t -M_2h_t \\ \notag
& + \lambda_{u, t+1} \left \{ \left(u_t+ h_tw_t\right) \left[s\left(1-\dfrac{u_t+h_tw_t}{k_u}\right)\left(\dfrac{u_t+h_tw_t}{k_u}-m\right) 
+1 \right](1-\delta_1 v_t)\right\}\\ \notag
        & +  \lambda_{v, t+1} \left \{ \left(v_t +\left(u_t+ h_tw_t\right) \left[s\left(1-\dfrac{u_t+h_tw_t}{k_u}\right)\left(\dfrac{u_t+h_tw_t}{k_u}-m\right) +1 \right]  \delta_2v_t \right) \left( 1- \gamma  \right) \right\}\\  \label{heqn3}
      & + \lambda_{w, t+1} \left \{ \left(w_t- h_tw_t\right)\left[ q\left(1-\dfrac{w_t-h_tw_t}{k_w}\right)\left(\dfrac{w_t-h_tw_t}{k_w}-n\right)+1\right] \right \}.
    \end{align}

    Next, we simplify the Hamiltonian function (\ref{heqn3}) and obtain
\begin{align}\notag
\mathcal{H}_t = &-M_1h^2_t -M_2h_t  + \lambda_{u, t+1} \left \{ \big[ \dfrac{1}{k^2_u}\left( (1+m)sk_uu_t-(ms-1)k^2_u-su_t \right)u_t \right.\\ \notag
 & +\dfrac{1}{k^2_u}\left(2(1+m)sk_uu_t-3su^2_t+k^2_u(1-ms)\right) w_t h_t - \dfrac{s}{k^2_u}\left(3u_t-k_u(1+m) \right)w^2_th^2_t \\ \notag
 &\left.- \dfrac{s}{k^2_u}w^3_th^3_t \big] (1-\delta_1 v_t)\right\} +  \lambda_{v, t+1} \left\{  \big[ \big(1+\dfrac{1}{k^2_u}\left( (1+m)sk_uu_t-(ms-1)k^2_u-su_t \right)\delta_2u_t\big)v_t \right.\\  \notag
 & +\dfrac{1}{k^2_u}\left(2(1+m)sk_uu_t-3su^2_t+k^2_u(1-ms)\right) \delta_2v_tw_t h_t  -\dfrac{s}{k^2_u}\left(3u_t-k_u(1+m) \right)\delta_2v_tw^2_th^2_t\\ \notag
 &\left.- \dfrac{s}{k^2_u}\delta_2v_tw^3_th^3_t \big] (1-\gamma)\right\} + \lambda_{w, t+1} \left \{ \frac{1}{k^2_w}\big[ q(1+n)k_ww_t-qw^2_t+(1-nq)k^2_w)w_t \right. \\  \notag
 & + \left.\dfrac{1}{k^2_w} \left(3qw^2_t-2qk_w(1+n)w_t+(qn-1)\right)w_th_t - \dfrac{1}{k^2_w}\left( q(w_t+2)-(q-n)k_w \right)w^2_th^2_t +\dfrac{q}{k^2_w}w^3_t h^3_t \right\}.\\ \label{heqn4}
 \end{align}
  The Hamiltonian $\mathcal{H}_t$ in Equation~(\ref{heqn4}) is a cubic expression in the control $h_t$. The concavity of $\mathcal{H}_t$ is difficult to calculate, and finding the optimality condition is not feasible due to the complicated dependencies on the control. Thus, we turn to an alternative optimization method.

We therefore employ a direct optimization technique that maximizes the objective functional J({\bf h})  without the adjoint equations. For this reason, we resort to the sequential quadratic programming (SQP) iterative method, one of the most effective methods for nonlinearly constrained optimization, which generates steps by solving quadratic subproblems \cite{nocedal1999numerical,paul1995SQP}. The  SQP  (also called the Lagrange--Newton method \cite{fletcher2000practical}) transforms a nonlinear optimization problem into a sequence of quadratic optimization problems (quadratic objective, linear equality, and inequality constraints), which are simpler to solve \cite{bonnans2006numerical}. The basic idea of SQP is to model a problem such as (\ref{objective1}) and (\ref{predatoreqn3}) at a given approximate solution,  say $u_t, v_t, w_t$, by a quadratic programming subproblem, and then to use the solution to this subproblem to construct a better approximation $u_{t+1}, v_{t+1}, w_{t+1}$. We assume the concavity condition in our maximization problem defined in {\bf Model B} is not specified. Also, we assume that all of the functions in (\ref{objective1}) and (\ref{predatoreqn3}) are twice continuously differentiable. For a detailed explanation of this method, see  \cite{bonnans2006numerical,paul1995SQP}. The method is outlined as follows:

 \begin{enumerate}
    \item [I.]  An initial guess/ iterate is made for the state vectors $(({\bf u} = \left(u_0, u_1, \dots,u_T\right))$,  $(({\bf v} = \left(v_0, v_1, \dots,v_T\right))$, $(({\bf w} = \left(w_0, w_1, \dots, w_T\right))$ and the vector of optimal controls $(({\bf h} = (h_0, h_1, \dots, h_{T-1}))$ using the initial conditions $u_0, v_0$ and $ w_0$ and the bounds on the controls
    \item [II.]  The initial gradient of the objective functional $J({\bf h})$ is computed which will be used in the construction and solving of the quadratic subproblem
    \item [III.]  The next step is the formulation of the quadratic subproblem obtained by the approximation of the objective functional $J({\bf h})$ and linearization of the state constraints
    \item[IV.] Using the MATLAB function $fmincon$ solver with $SQP$ algorithm, the quadratic subproblem is solved
    \item[VI.] Next, the control variable $h_{t+1}$ is updated and the state variables $u_{t+1}, v_{t+1}$ and $w_{t+1}$ are updated using the newly  updated control
     \item[VII.] To check for convergence, new gradients of the states and the objective functional are computed. Then the algorithm stops if the Karush-Kuhn-Tucker (KKT) condition for optimization holds. Otherwise, increase the time step $t$ by $1$ and repeat the steps. 
\end{enumerate}

 \begin{Remark}
  The primary reason for using a quadratic subproblem for the optimization in  {\bf Model B} (with a quadratic objective functional and state constraints) is that quadratic subproblems are relatively easy to solve and yet, in their objective functional, can reflect the nonlinearities of the original problem. This quadratic subproblem is generated by carrying out a second-order Taylor expansion of the objective functional around the current iterate. The gradient determines the linear term, whereas the Hessian gives the quadratic term. The constraints can also be linearized using first-order Taylor expansions. The  MATLAB function $fmincon$ solver is used to solve this quadratic subproblem. 
\end{Remark} 

\section{Numerical results and discussion}
\label{sec3}
  The optimality systems of the two orders of events will be solved by iterative methods using MATLAB software. Thus, forward-solving of the state system followed by backward-solving of the adjoint system of {\bf Model A}  and employing sequential quadratic programming that uses the fmincon function with $SQP$ algorithm to solve  {\bf Model B}. 
 In the simulations of both models, we set the length of time corresponding to each breeding season as $T = 6$, and the maximum proportion of the reserve species to be translocated at any particular time step to be 70\%, i.e., $A = 0.70$. 
 we set 
 $m=n=0.25$ 
 and consider the constraint $w_0 >n k_w$, that is, initially the size of the target population is below the Allee threshold, and the reserve population size is above the threshold condition, which allows for harvesting. This work does not study a specific species; however, it provides a general augmentation model for any predator-prey relationship, hence all the parameter values and initial conditions used in this research are hypothetical. We assume the population units are 1000 individuals. Throughout the simulations, the initial conditions and the carrying capacities are expressed as products of the units of the population (1000 individuals). We set the initial conditions $ {\bf u}_0 = 0.20;  {\bf v}_0 = 0.5; $  $ {\bf w}_0 = 0.70$ with the following set of baseline parameters 
\begin{equation}\label{baselineeqn}
    \begin{aligned}
        T = & 6, s=0.25, k_u = 0.50, m = 0.25, \delta_1 = 0.40, \delta_2 = 0.50, \gamma = 0.025,\\
        &q = 0.85, k_w = 0.80, n = 0.25, M_1 = 0.40, M_2 = 0.15, N = 0.50,
    \end{aligned}
\end{equation} 

 for the simulations of {\bf Model A} and {\bf Model B}. To  examine different dynamical behaviors of the models, we vary the parameters $\gamma, q, M_1, M_2,$ and $N$ while the rest are chosen as in Equation~(\ref{baselineeqn}). To further assess the robustness of our model and the effectiveness of our numerical solutions, we present values of the objective functional with no optimal control and values of the objective functional with optimal control.  In the simulations, the plots of the prey populations with optimal controls are indicated in black, the plots of the reserve population with optimal controls are indicated in blue, and the plots of the predator populations with optimal controls are indicated in red. Again, gray dotted lines indicating the Allee thresholds ($m = n = 0.25$) in the prey and the predator populations are shown in all the plots.  Figures~\ref{Figure1A} and \ref{Figure1B} show additional plots without augmentation, indicated in red dash--dot lines.  The simulation results are presented in the subsequent subsections with discussions.

    \subsection{Simulation of the models (with no control and with optimal control)}
  The simulations in this subsection provide the baseline parameter values in Equation~(\ref{baselineeqn}). Using the discrete version of the forward-backward sweep method outlined in the latter part of Section~\ref{sec 4.1} and the baseline parameter values, we present the plots of {\bf Model A} in
 Figure~\ref{Figure1A}. Also, using the SQP iterative method with baseline parameters, the plots of {\bf Model B} are presented in Figure~\ref{Figure1B}.

	\begin{figure} [H]
			\centering
    \includegraphics[width=1.1\linewidth]{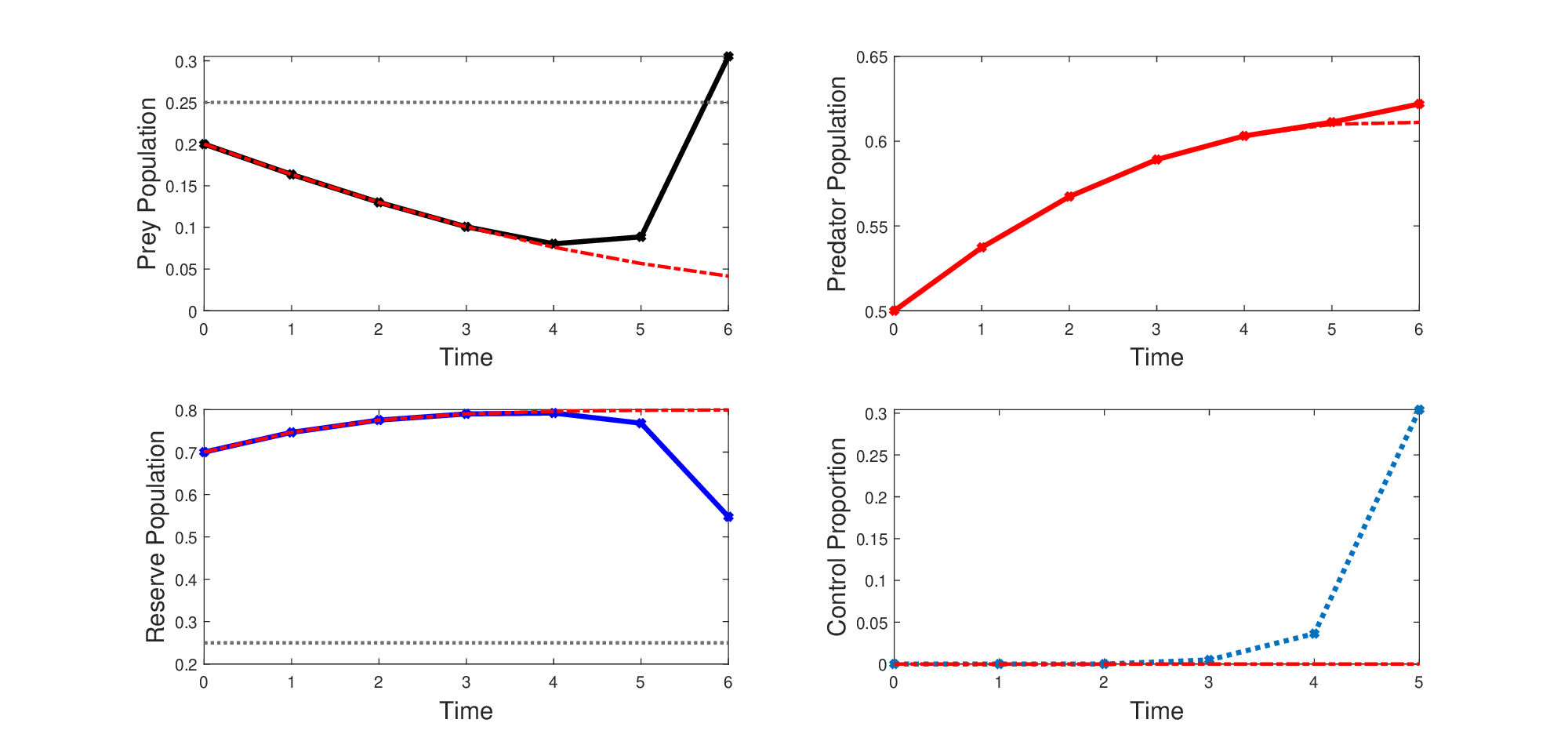}
		\caption{Plots of the states with no control and plot with optimal control of the discrete augmentation {\bf Model A} where the population is allowed to grow and then predator-prey action happens before augmentation at each time step using the baseline parameter values (\ref{baselineeqn}). The red dash-dot lines indicate the plots of the populations without augmentation, and the gray dotted lines indicate the Allee thresholds ($m = n = 0.25$) for the prey and the reserve populations, respectively. The corresponding optimal control and the objective functional values are: ${\bf h}^\ast = [0,0,0,0,0.04,0.30]$,  $J(0) = 0.4413$ (with no control), and $J({\bf h}^{\ast}) = 0.4896$ (with optimal control).} \label{Figure1A}
	\end{figure}

  The optimal control values in Figure~\ref{Figure1A}, ${\bf h}^\ast = [0,0,0,0,0.04,0.30]$, indicate that at each time step $t = 0,1,2,$ and $3$, no individuals from the reserve population are translocated to the target population. However, at time steps $t=4$ and $5$, about $4\%$ and $30\%$ of the reserve population are translocated to augment the target population, respectively. This increases the target population size to $0.30$ above the minimum threshold for growth by $0.05$.  In the same manner, in Figure~\ref{Figure1B},  the optimal control values,  ${\bf h} = [0,0, 0,0.05,0.08,0.15]$,  indicate that augmentation occurs at time steps $t =3, 4,5$, where about $5\%$, $8\%$ and $15\%$ of the reserve population can be translocated to augment the target population, respectively. In this case, the target population showed a rise in the population level when augmentation began; however, the population size of the target species is still below the minimum threshold for growth at the final time using the same baseline values for the simulation. Therefore, the results of the optimal strategy grow and then predator-prey action and then augment in Figure~\ref{Figure1A}  indicate a higher level of the target population size than the optimal strategy augment and then grow and then predator-prey action obtained in Figure~\ref{Figure1B} at the end of the final time steps ($T = 6$) using the same baseline parameters.  This result shows the importance of employing different orders of events in the discrete models.

	\begin{figure} [H]
			\centering
		\includegraphics[width=1.1\linewidth]{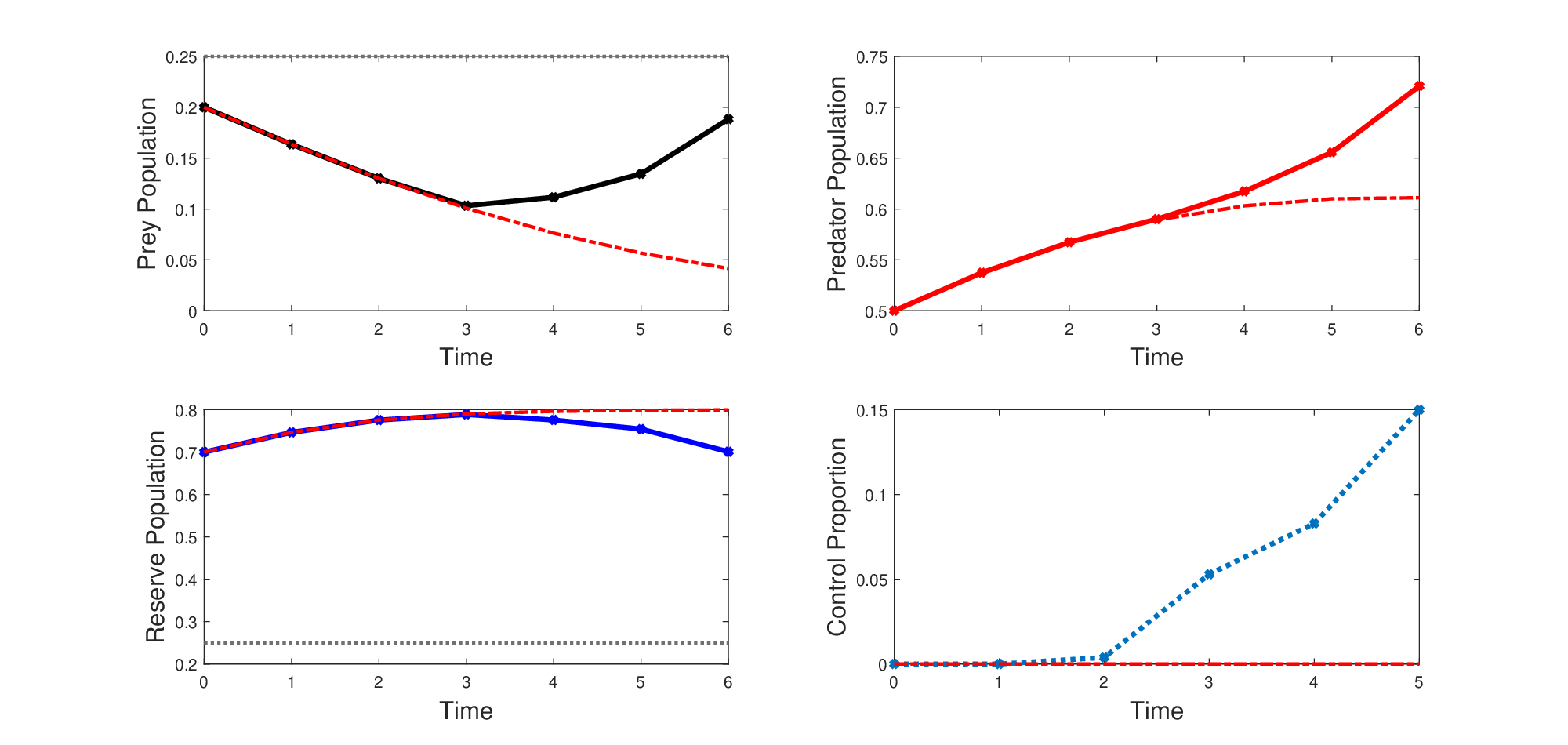}
		\caption{Plots of the states with no control and plot with optimal control of the discrete augmentation {\bf Model B} where the population is augmented and then grows and then predator-prey action at each time step using the baseline parameter values (\ref{baselineeqn}). The red dash-dot lines indicate the plots of the populations without augmentation, and the gray dotted lines indicate the Allee thresholds ($m = n = 0.25$) for the prey and the reserve populations, respectively. The corresponding optimal control and the objective functional values are:   ${\bf h}^\ast = [0,0, 0,0.05,0.08,0.15]$, $J(0) = 0.4413$ (with no control), $J({\bf h}^{\ast}) = 0.4825$ (with optimal control).} \label{Figure1B}
	\end{figure}

       The predator population levels in Figures~\ref{Figure1A} and~\ref{Figure1B}  show a rise at the end of the final time step when optimal controls are implemented.  Note that in this work, the target population is of interest; however, maximizing the target population provides more resources (prey) for the predators, which further results in an increase in the population levels of the predator population. Again, with regards to the objective functional values using the baseline plots, Figure~\ref{Figure1A}  gives $J(0) = 0.4413$ (with no control) and $J({\bf h}^{\ast}) = 0.4896$ (with optimal control), which means that implementing the optimal control in {\bf Model A}  gives  $11\%$ higher objective functional value than when there is no optimal control. Similarly, Figure~\ref{Figure1B} gives  $J(0) = 0.4413$ (with no control) and $J({\bf h}^{\ast}) = 0.4825$ (with optimal control), indicating $9\%$ higher objective functional value of implementing the optimal augmentation {\bf Model B} than when there is no control. The novel results obtained indicate the importance of considering different orders of events. In addition, the reserve population levels in Figures~\ref{Figure1A} and~\ref{Figure1B} decline when the optimal control values are not zero due to the movement of some of the individuals.

    The optimal control values at the time step $t = 0,1, 2,3$  in Figure~\ref{Figure1A} and $t = 0,1, 2$ in Figure~\ref{Figure1B} are all zero,  indicating no individual is translocated for augmentation at these time steps and this applies to all the time steps where the optimal control values are zero. The optimal control values are $30\%$ for {\bf Model A}  and $15\%$ for {\bf Model B} at the final time steps in Figures~\ref{Figure1A} and~\ref{Figure1B}, respectively. The higher the optimal control value is, the more individuals are translocated from the reserve population to the target region. 
     The predator population levels at the final time with augmentation are approximately $0.62$ and $0.72$ in Figures~\ref{Figure1A} and~\ref{Figure1B}, respectively, as compared to the previously population levels of approximately $0.61$ and $0.56$, respectively.

   \subsection{Effect of  weight and cost constants}
   This subsection provides more insights into how the weight and cost constants influence our models.
   Figures~\ref{Figure2A} and~\ref{Figure2B} show results of setting only $M_2=0$ and keeping the baseline parameter values constant. Also, Figures~\ref{Figure3A} and~\ref{Figure3B}  show plots of the models with $N = 0.10$ and $M_2=0$.

   \begin{figure} [H]
			\centering
			\includegraphics[width=1.1\linewidth]{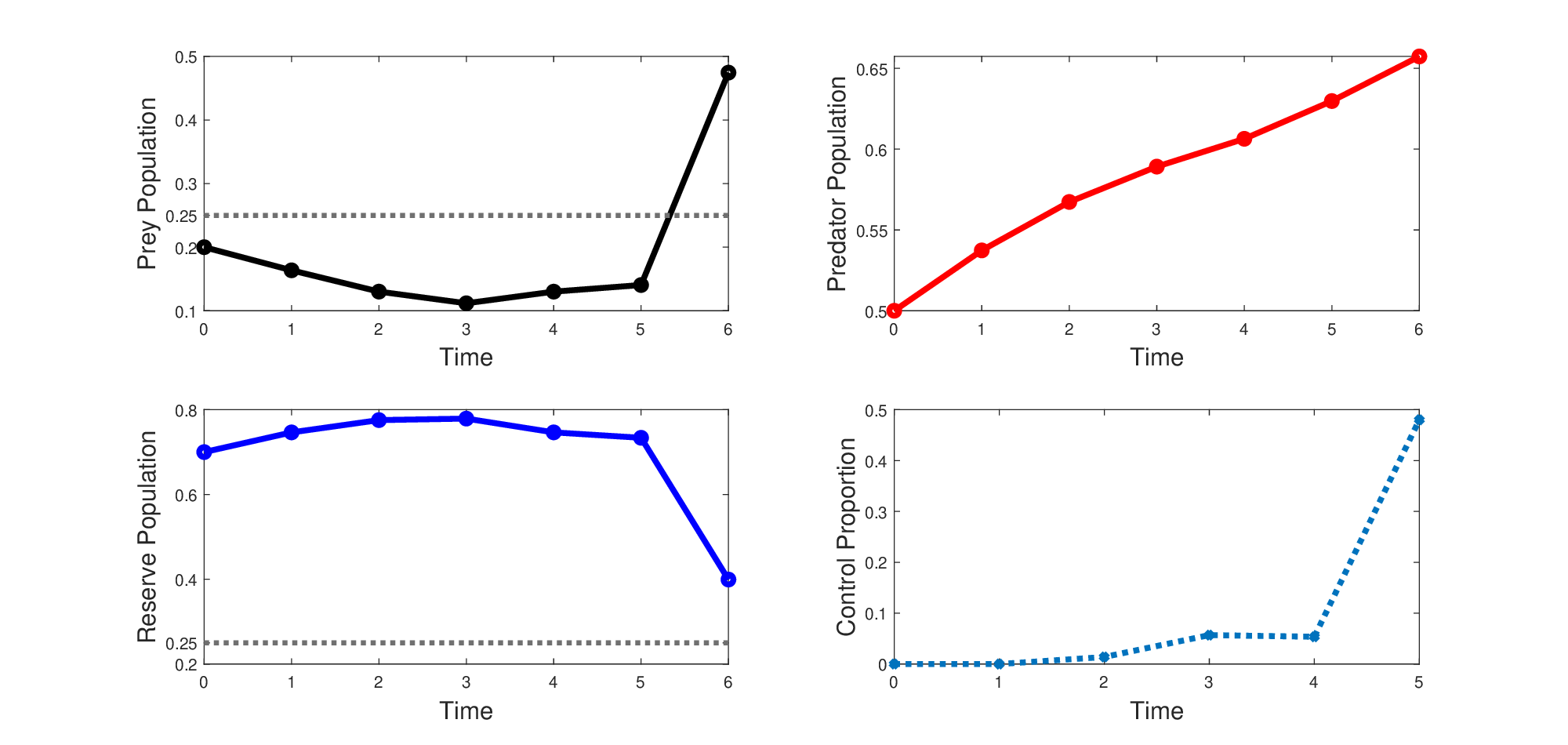}
		\caption{ Plots of the states and plot of the optimal control of the discrete augmentation {\bf Model A} where the population is allowed to grow and then predator-prey action happens before augmentation at each time step using the baseline parameter values (\ref{baselineeqn}) except for $ M_2 = 0$. The corresponding optimal control and the objective functional values are: ${\bf h}^\ast = [0,0,0.01,0.06,0.05,0.48]$, $J(0) = 0.4413$ (with no control), $J({\bf h}^{\ast}) = 0.5794$ (with optimal control).} \label{Figure2A}
	\end{figure}

   \begin{figure} 
			\centering
			\includegraphics[width=1.1\linewidth]{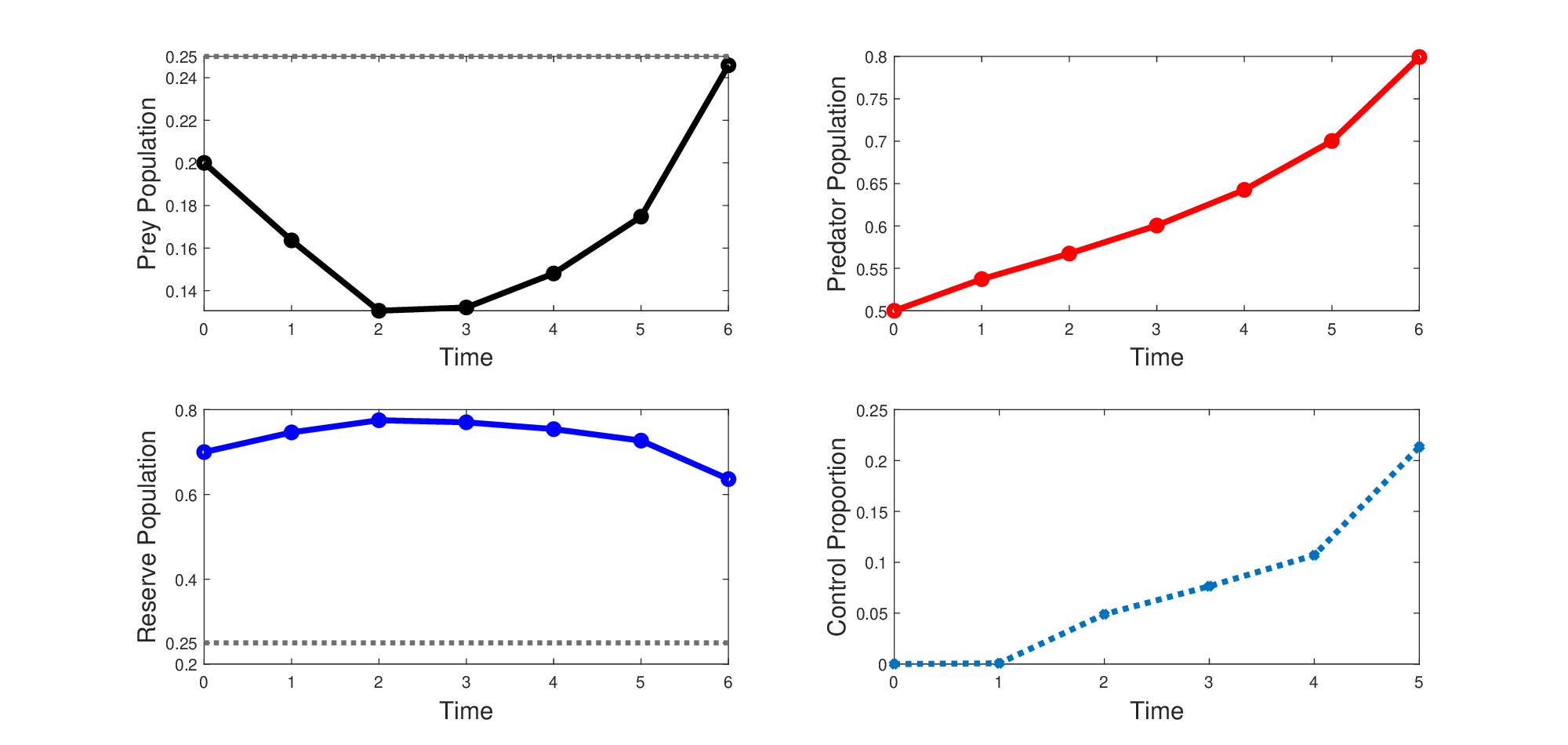}
		\caption{Plots of the states and plot of the optimal control of the discrete augmentation {\bf Model B} where the population is augmented and then grows and then predator-prey action at each time step using the baseline parameter values (\ref{baselineeqn}) except for $ M_2 = 0$. The corresponding optimal control and the objective functional values are:  ${\bf h}^\ast = [0,0,0.05,0.08,0.11,0.21]$, $J(0) = 0.4413$ (with no control), $J({\bf h}^{\ast})= 0.5379$ (with optimal control).} \label{Figure2B}
	\end{figure}

     In  Figure~\ref{Figure2A}, the optimal control values, ${\bf h}^\ast = [0,0,0.01,0.06,0.05,0.48]$ show that at each time step $t = 2, 3, 4$ and $5$, about $1\%, 6\%, 5\%$ and $48\%$ individuals are translocated respectively from the reserve population to the target region for augmentation when the order of event is growth followed by predator-prey action and then augmentation. Similarly, the optimal control values,  ${\bf h}^\ast = [0,0,0.05,0.08,0.11,0.21]$, in Figure~\ref{Figure2B} indicate that at each time steps $t = 2, 3, 4$ and $5$, about $5\%, 8\%, 11\%$ and $21\%$ are translocated respectively from the reserve population to the target region for augmentation when the order of event is to augment followed by growth and then predator-prey action. The objective functional values  $J(0) = 0.4413$ (with no control), $J({\bf h}^{\ast}) = 0.5794$ (with optimal control) and  $J(0) = 0.4413$ (with no control), $J({\bf h}^{\ast})= 0.5379$ (with optimal control) obtained in  Figures~\ref{Figure2A} and~\ref{Figure2B} respectively gives $31\%$ and $22\%$ higher objective functional values for the models with optimal control than when there is no control action.  
   
  \begin{figure} 
			\centering
			\includegraphics[width=1.1\linewidth]{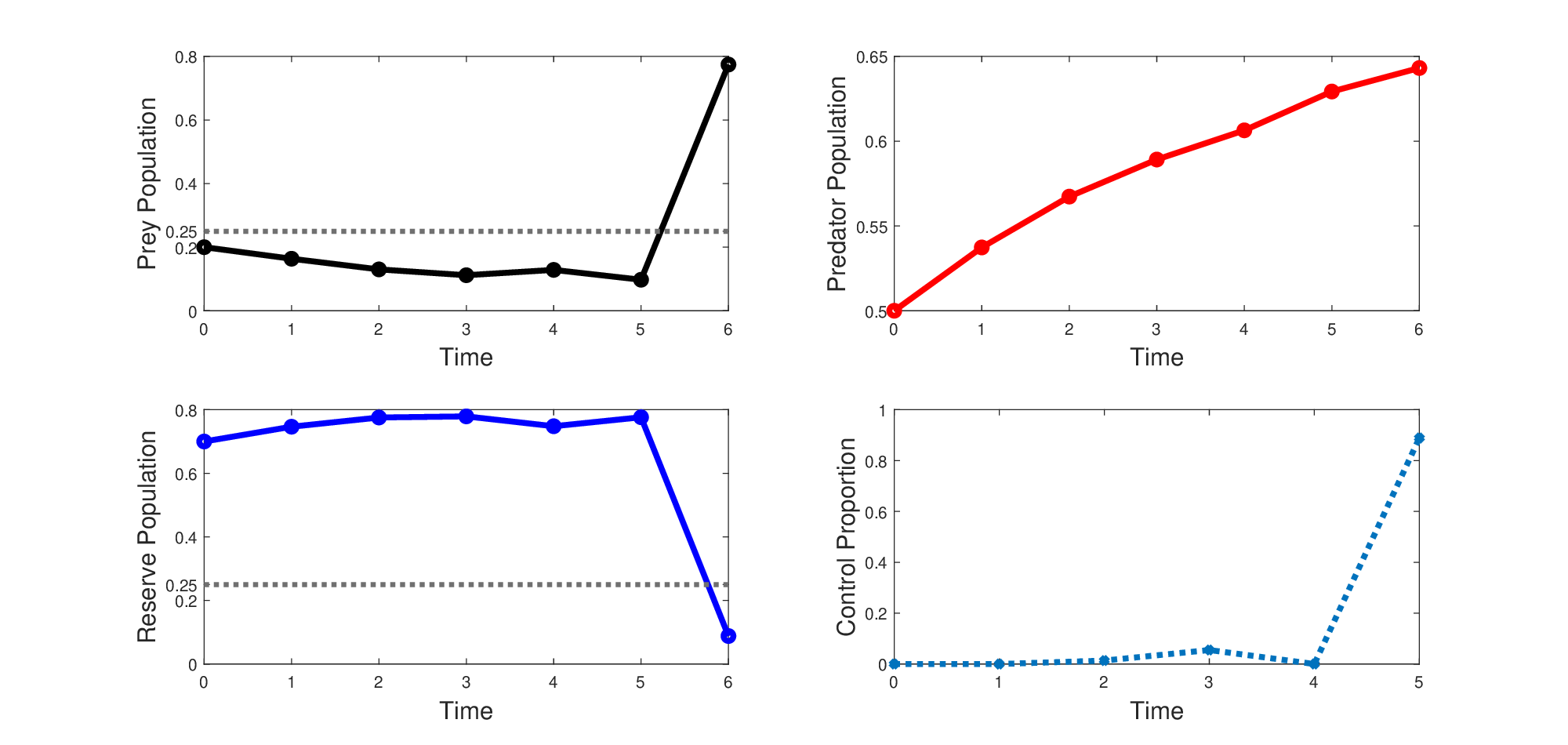}
		\caption{Plots of the states and plot of the optimal control of the discrete augmentation {\bf Model A} where the population is allowed to grow and then predator-prey action happens before augmentation at each time step using the baseline parameter values (\ref{baselineeqn}) except for $M_2 = 0, N= 0.10$. The corresponding optimal control and the objective functional values are: $ {\bf h}^\ast = [0, 0, 0.01, 0.06, 0, 0.9]$, $J(0) = 0.1215$ (with no control), $J({\bf h}^{\ast}) = 0.4662$ (with optimal control).} \label{Figure3A}
	\end{figure}

\begin{figure} [H]
			\centering
			\includegraphics[width=1.1\linewidth]{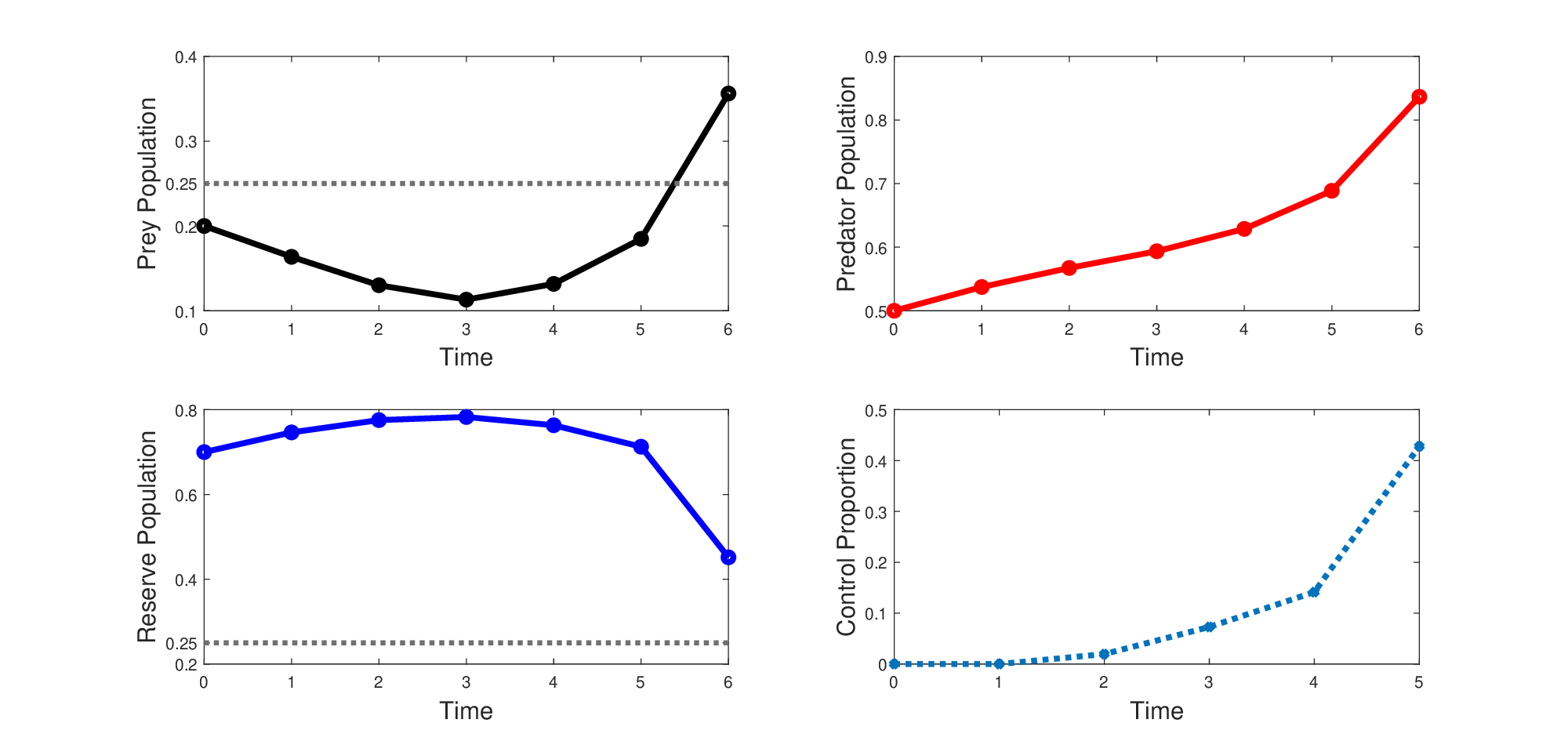}
		\caption{Plots of the states and plot of the optimal control of the discrete augmentation {\bf Model B} where the population is augmented and then grows and then predator-prey action at each time step using the baseline parameter values (\ref{baselineeqn}) except for $ M_2 = 0, N= 0.10$. The corresponding optimal control and the objective functional values are:  $ {\bf h}^\ast = [0, 0, 0.02,0.07,0,0.14]$, $J(0) = 0.1215$ (with no control), $J({\bf h}^{\ast}) = 0.3178$ (with optimal control).} \label{Figure3B}
	\end{figure}

     In  Figures~\ref{Figure3A} and~\ref{Figure3B}, we set $M_2 = 0$ and $N=0.10$ (reducing the weight constant for the reserve population from $0.50$ to $0.10$) and plots of the states and plot of the optimal control of {\bf Model A}  and {\bf Model B} are shown in  Figures~\ref{Figure3A} and~\ref{Figure3B}, respectively. The optimal control values, $ {\bf h}^\ast = [0, 0, 0.01, 0.06, 0, 0.9]$, show that about $1\%$, $6\%$ and $90\%$ of the reserve population at times $t = 2, 3$, and $5$ respectively are translocated to augment the target population with the event in {\bf Model A} that yields a high target population size of 0.8 at the end of the period. Likewise, the optimal control values,   $ {\bf h}^\ast = [0, 0, 0.02,0.07,0,0.14]$, show that about $2\%$, $7\%$ and $14\%$ of the reserve population at times $t = 2, 3$, and $5$ respectively are translocated to augment the target population with the event in {\bf Model B} causing a rise in the target population level to $0.37$ which is above the minimum threshold for growth.

   The objective functional values   $J(0) = 0.1215$ (with no control), $J({\bf h}^{\ast}) = 0.4662$ (with optimal control) and  $J(0) = 0.1215$ (with no control), $J({\bf h}^{\ast}) = 0.3178$ (with optimal control) from Figures~\ref{Figure3A} and~\ref{Figure3B} respectively gives $284\%$ and $162\%$ higher objective functional values for adopting the optimal strategies than when there is no
 control.  Moreover, for $M_2 =0$, {\bf Model A} gives  $48\%$ of control values at the final time and $90\%$ for $M_2 =0, N=0.10$. Therefore, when $M_2=0$ and $N$ is reduced from $0.50$ to $0.10$ with the rest of the parameters fixed in (\ref{baselineeqn}), more of the reserve species are translocated to the target region, causing the target population level to rise to $0.79$.  Hence, the optimal strategies (setting $M_2 = 0$ and $N=0.10$) in Figures~\ref{Figure3A} and~\ref{Figure3B} give different objective functional values with optimal control and the target population levels, which are different from the results in the Figures~\ref{Figure1A} and~\ref{Figure1B}, respectively, and also in Figures~\ref{Figure2A} and~\ref{Figure2B}, respectively.

     \subsection{Effect of the reserve population's growth rate and carrying capacity }
     The impact of reducing the growth rate and carrying capacity of the reserve species on the different optimal orders of events is investigated in this section. The intrinsic growth rate of the reserve species, $q$, is reduced from $0.85$ to $0.70$, and the environmental carrying capacity of the reserve population, $k_w$, is also reduced from $0.80$ to $0.60$, with all the initial conditions and the remaining baseline parameter values (\ref{baselineeqn}) remaining the same.  Figure~\ref{Figure4A} shows the time series plot of {\bf Model A} whilst Figure~\ref{Figure4B} shows plots of {\bf Model B} under this scenario.

    \begin{figure} [H]
			\centering
			\includegraphics[width=1.1\linewidth]{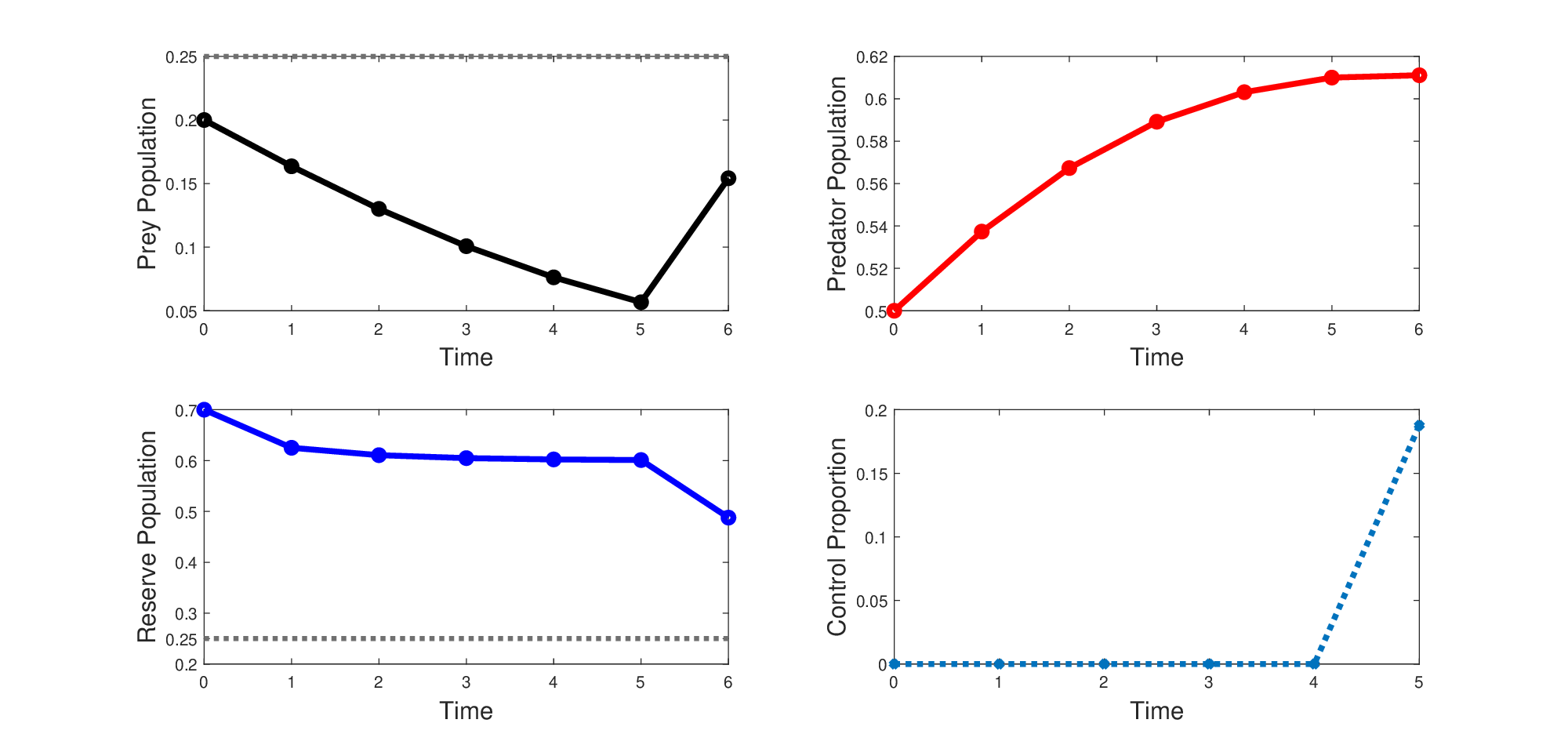}
		\caption{Plots of the states and plot of the optimal control of the discrete augmentation model {\bf Model A} where the population is allowed to grow and then predator-prey action happens before augmentation at each time step using the baseline parameter values (\ref{baselineeqn}) except for $q =0.70, k_w  = 0.60$. The corresponding optimal control and the objective functional values are:  ${\bf h}^\ast = [0, 0, 0, 0, 0, 0, 0.19]$, $J(0) = 0.3418$ (with no control), $J({\bf h}^{\ast}) = 0.3559$ (with optimal control).} \label{Figure4A}
	\end{figure}

   \begin{figure} 
			\centering
			\includegraphics[width=1.1\linewidth]{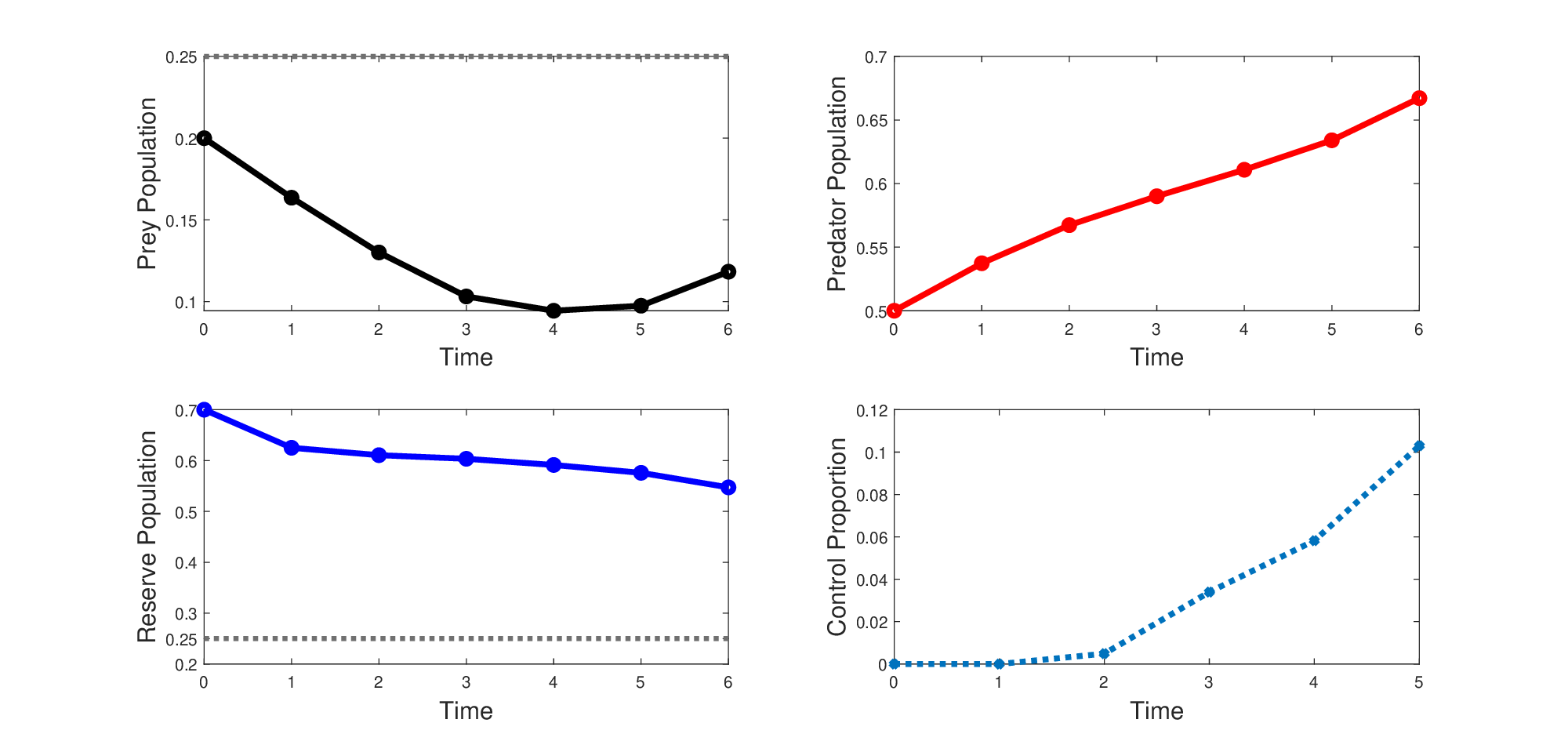}
		\caption{Plots of the states and plot of the optimal control of the discrete augmentation {\bf Model B} where the population is augmented and then grows and then predator-prey action at each time step using the baseline parameter values (\ref{baselineeqn}) except for $q =0.70, k_w  = 0.60$. The corresponding optimal control and the objective functional values are:  ${\bf h}^\ast = [0, 0, 0, 0.03, 0.06, 0.10]$, $J(0) = 0.3418$ (with no control), $J({\bf h}^{\ast}) = 0.3559$ (with optimal control).} \label{Figure4B}
	\end{figure}

      In the simulation of the optimal augmentation {\bf Model A}, the optimal control values, ${\bf h}^\ast = [0, 0, 0, 0, 0, 0.19]$, in Figure~\ref{Figure4A} reveal that augmentation occurs at only the final time step, $t=0.19$, where about $19\%$ of the reserve population is translocated to the target region for augmentation. The $19\%$ represents the value of optimal control at the final time. In this same simulation, the objective functional values  $J(0) = 0.3418$ (with no control) and  $J({\bf h}^{\ast}) = 0.3559$ (with optimal control) give $4\%$ higher objective functional value for implementing the optimal control than when there is no control. Similarly, the simulation of the optimal augmentation {\bf Model B} observed in Figure~\ref{Figure4B} gives the optimal control values  ${\bf h} = [0, 0, 0, 0.03, 0.06, 0.10]$, indicating augmentation occurs at time steps $t = 3, 4$ and $5$, where about $3\%$, $6\%$  and $10\%$ of the reserve population are translocated, respectively.  The corresponding objective functional values $J(0) = 0.3418$ (with no control) and  $J({\bf h}^{\ast}) = 0.3559$ (with optimal control) give $4\%$ higher objective functional value than when there is no optimal control implementation. Though the objective functional values in this scenario's optimal augmentation models are similar, the two augmentation models exhibit different target and reserve population levels at the final time. The results in Figures~\ref{Figure4A} and~\ref{Figure4B} show different final target population levels of $0.15$ and $0.12$, respectively, which are all lower than the minimum threshold for growth $(m=n=0.25)$ as compared to the target population levels in Figures~\ref{Figure1A} and~\ref{Figure1B}. The lower reserve population levels shown in Figures~\ref{Figure4A} and~\ref{Figure4B} are due to the lower reserve species growth rate and carrying capacity.

   \subsection{Effect of the predator mortality rate}
   In this scenario, we increase only the mortality rate, $\gamma$, of the predator population from $0.025$ to $0.10$ and hold all the other parameters in Equation~(\ref{baselineeqn}) fixed. The quantitative results of {\bf Model A} and {\bf Model B} using $\gamma = 0.10$ are shown in Figures~\ref{Figure5A} and~\ref{Figure5B}, respectively.

	\begin{figure} 
			\centering
			\includegraphics[width=1.1\linewidth]{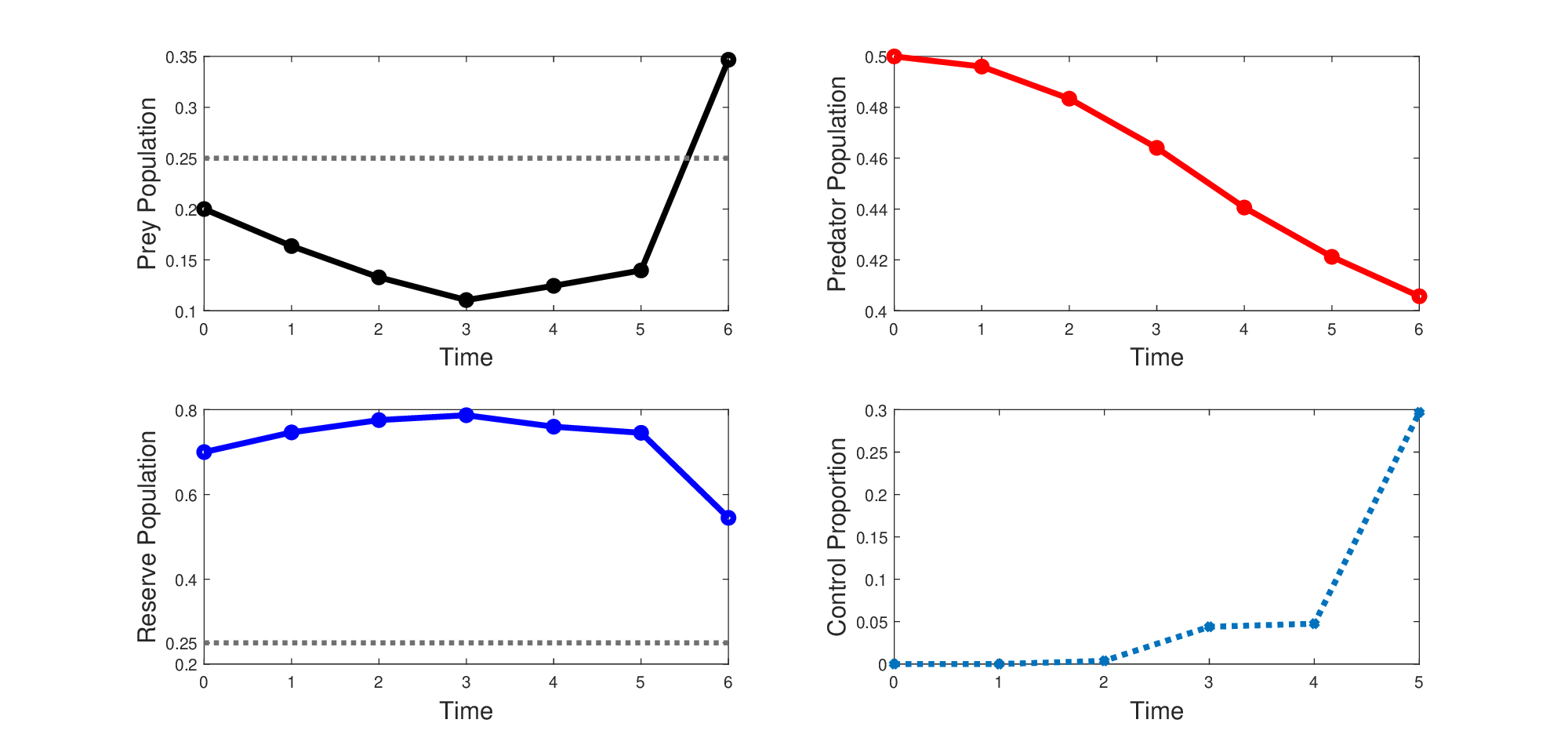}
		\caption{Plots of the states and plot of the optimal control of the discrete augmentation {\bf Model A} where the population is allowed to grow and then predator-prey action happens before augmentation at each time step using the baseline parameter values (\ref{baselineeqn}) except for $\gamma = 0.10$. The corresponding optimal control and the objective functional values are: ${\bf h}^\ast = [0,0,0,0.04,0.05,0.19]$, $J(0) = 0.4572$ (with no control), $J({\bf h}^{\ast}) = 0.5235$ (with optimal control).} \label{Figure5A}
	\end{figure}

	\begin{figure} 
			\centering
			\includegraphics[width=1.1\linewidth]{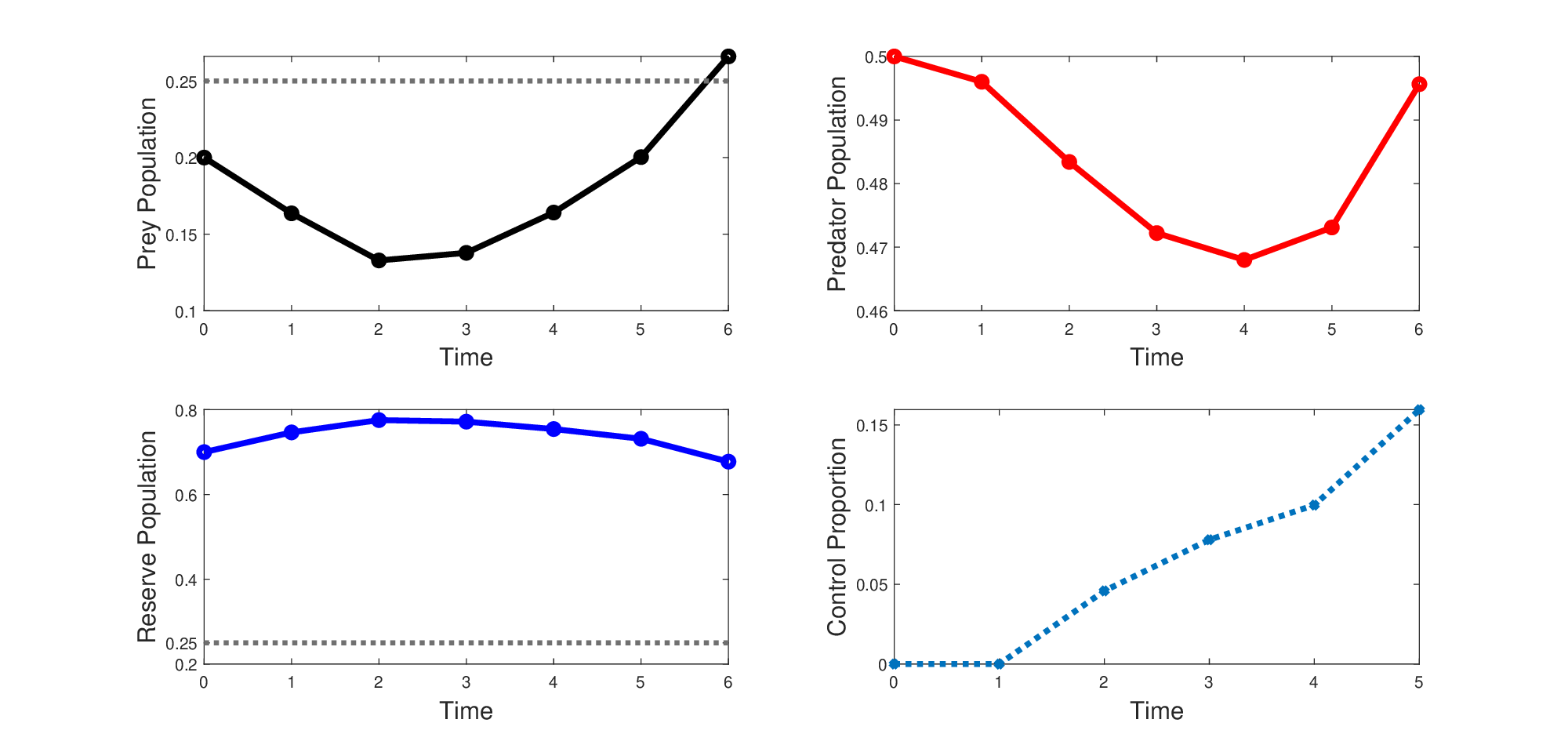}
		\caption{Plots of the states and plot of the optimal control of the discrete augmentation {\bf Model B} where the population is augmented and then grows and then predator-prey action at each time step using the baseline parameter values (\ref{baselineeqn}) except for $\gamma = 0.10$. The corresponding optimal control and the objective functional values are: ${\bf h}^\ast = [0,0,0.05,0.08,0.10,0.16]$, $J(0) = 0.4572$ (with no control), $J({\bf h}^{\ast}) = 0.5299$ (with optimal control).} \label{Figure5B}
	\end{figure}
    
     In Figure~\ref{Figure5A}, the values of optimal controls in optimal augmentation {\bf Model A} for $\gamma = 0.10$   is ${\bf h}^\ast = [0,0,0,0.04,0.05,0.19]$, indicating at each time step $t = 3, 4$ and $5$, about $4\%, 5\%$, and $19\%$ of the reserve are translocated to the target population for augmentation, respectively. This optimal control strategy leads to the target and reserve population level above the threshold for growth at the final time, $T = 6$. The level of the predator population declines towards the end of the period. In this same Figure~\ref{Figure5A}, the prey population level is $0.35$, which is above that of Figure~\ref{Figure1A} when the death rate of the predator population was $0.025$.  The augmentation happens at different time steps of $t = 3, 4$ and $5$. Using the same parameters for the optimal augmentation {\bf Model A}, the objective functional values  $J(0) = 0.4572$ (with no control) and $J({\bf h}^{\ast}) = 0.5235$ (with optimal control) give $15\%$  higher objective functional value with optimal control than when there is no 
     control. 

     In Figure~\ref{Figure5B}, the underlying model is to augment and then grow and then predator-prey action which gives the optimal control values, ${\bf h}^\ast = [0,0,0.05,0.08,0.10,0.16]$, indicating $5\%$, $8\%$, $10\%$ and $16\%$ of the reserve species are translocated to the target region at time steps $t = 2, 3, 4$ and $5$, respectively. Both the target and the reserve population levels in Figure~\ref{Figure5B} are above the threshold for growth; however, the population level of the target species in Figure~\ref{Figure5B} is lower than that of Figure~\ref{Figure5A} using the same set of parameter values.

     The predator population level in {\bf Model B}  declines till the minimum point at $t=4$. The population level then increases again till the final time $t=6$. The augmentation happens at time steps $t = 2, 3, 4$ and $5$. The results of this scenario give the objective functional values  $J(0) = 0.4572$ (with no control) and  $J({\bf h}^{\ast}) = 0.5299$ (with optimal control), representing $16\%$ higher objective functional value with optimal control than no control. 

      It can be observed that the parameter sets used to generate Figures~\ref{Figure5A} and~\ref{Figure5B} lead to qualitatively different dynamics of the predator population when compared to the results generated from the baseline parameter set in Equation~(\ref{baselineeqn}). That is, the predator population is able to rebound in {\bf Model B}, but continues to decline in {\bf Model A}.

 \begin{table}[h!]
\centering
 \caption{Summary of the objective functional values $J({\bf h})$ of the models with and without augmentation, and with percentage increase in the objective functional values for {\bf Model A} and  {\bf Model B} }\label{table1}
\begin{tabular}{lccc}
\hline
Parameter values & No Aug & Model A & Model B \\
\hline
Baseline Eqn~(\ref{baselineeqn}) & 0.4413 & 0.4896 (11\%) & 0.4825 (9\%) \\
Eqn~(\ref{baselineeqn}) with $M_2 = 0$ & 0.4413 & 0.5794 (31\%) & 0.5379 (22\%) \\
Eqn~(\ref{baselineeqn}) with $M_2 = 0$ and $N = 0.1$ & 0.1215 & 0.4662 (284\%) & 0.3178 (162\%) \\
Eqn~(\ref{baselineeqn}) with $q = 0.70$ and $k_w= 0.60$ & 0.3418 & 0.3559 (4\%) & 0.3559(4\%) \\
Eqn~(\ref{baselineeqn}) with $\gamma = 0.10$ & 0.45728 & 0.5235 (15\%) & 0.5299(16\%) \\
\hline
\end{tabular} 
\end{table}

Table~\ref{table1} gives the objective functional values from the simulating Models {\bf A} and {\bf B}. The values without augmentation appear to be the same in both models, irrespective of the parameter scenarios. However, the objective functional values with augmentation differ, except in cases where the growth rate and environmental carrying capacity of the reserve species were reduced, which may have caused a low abundance of the reserve population ready for augmentation.  The percentage values represent the increase in the objective functional values with augmentation compared to the values with no augmentation.

\section{Conclusion}
\label{sec4}
   Discrete-time optimal control theory was applied to a model of species augmentation with predator-prey relationships. The optimal control aims to maximize the prey (target population) and the reserve population at the final time and minimize the associated costs. We carry out the optimal augmentation models for the threatened/endangered species via two orders of events: growth followed by predator-prey action and then augmentation; and augmentation followed by growth and then predator-prey action.  Optimization in the two models is solved numerically using the discrete version of the forward-backward sweep method and the sequential quadratic programming iterative method, respectively. We have indicated the objective functional values for the simulation for each scenario. The simulation results depict different population levels in the 2 models by varying some parameter sets.
  
 In the simulations, with the same baseline parameter values in Figures~\ref{Figure1A} and~\ref{Figure1B}, the optimal augmentation  {\bf Model A} shows $4\%$ and $30\%$ of the reserve population is translocated at time steps $t=4$ and $5$, respectively, suggesting a delayed but intensive intervention strategy. In contrast, the optimal augmentation  {\bf Model B} indicates $5\%$, $8\%$, and $15\%$ of the reserve population are translocated at steps $t =3, 4$,  and $5$, respectively, portraying earlier and gradual translocations of individuals.  These have resulted in an increase in the target population level in Figure~\ref{Figure1A} to $0.3$ at the final time step $T=6$, which is above the minimum threshold for growth, and the target population level in Figure~\ref{Figure1B} is $0.19$, which is below the threshold for growth.  Again, decreasing the reserve population growth rate from $0.85$ to $0.70$ and the reserve population's environmental carrying capacity from $0.80$ to $0.60$ results in the target population level falling below the growth threshold in the 2 models by the final time.  This can be observed in Figures~\ref{Figure4A} and~\ref{Figure4B} with different target population levels of $0.15$ and $0.12$, respectively, which are all lower than the minimum threshold for growth. This means the reserve population should grow well initially to have enough individuals for translocation.  Furthermore,  the predator population's death rate plays an important role in the dynamics of each order of events as observed in Figures~\ref{Figure5A} and~\ref{Figure5B}. The reserve and the target population sizes of each optimal augmentation strategy are above the threshold for growth at the final time when the predator mortality rate is increased from $0.025$ to $0.10$, which can be observed in Figures~\ref{Figure5A} and~\ref{Figure5B}, respectively.  The level of the predator population declines towards the end of the final time in Figure~\ref{Figure5A} since more individuals die naturally. 
   
   In summary, we have illustrated the use of optimal control theory for species augmentation in predator-prey dynamics. The study is conducted using a discrete-time difference equation. We considered two optimal augmentation models with different orders of events. The results of the simulations of the two models show different population levels at the end of the augmentation horizon using the same parameter sets, indicating the importance of using a different order of events. For instance, natural resource managers interested in maximizing the total population at the end of the augmentation horizon may opt for the event that yields higher population levels by the final time. This work is the first optimal augmentation model incorporating the predator-prey relationship with discrete equations.
   
  There are other possible orders of events that can be taken into consideration in our model and are yet to be explored. One may want to consider a scenario where the predator-prey interaction occurs followed by the growth of the prey population but before the decay of the predator population, or prior to both the growth of the prey population and decay of the predator population and then augmentation. These and many more possible orders of events can be explored, and we recommend them for future extensions.

  \section*{Data Availability}
  No datasets were generated or analyzed during the current study.
     
   \section*{Acknowledgements}
The authors are grateful to the Pan African University Institute for Basic Sciences, Technology and Innovation (PAUSTI) for the support.

\bibliographystyle{unsrtnat}
\bibliography{References}

\end{document}